\documentclass[11pt]{amsart}

\usepackage{amssymb}
\usepackage{amsmath}
\usepackage{enumerate}
\usepackage[latin1]{inputenc}

\usepackage[T1]{fontenc}
\usepackage[sc]{mathpazo}
\linespread{1.05} 

\usepackage[pdftex]{graphicx, color}
\usepackage{xypic}

\definecolor{alert}{rgb}{0.8,0,0}

\newcommand{\s}{\mathbb{S}}

\renewcommand{\r}{\mathbb{R}}

\renewcommand{\c}{\mathbb{C}}
\newcommand{\p}{\mathbb{P}}
\newcommand{\n}{\mathbb{N}}

\DeclareMathOperator{\Index}{Index}
\DeclareMathOperator{\pIm}{Im}

\newcommand{\prodesc}[2]{\left\langle{#1},{#2}\right\rangle}

\newtheorem{theorem}{Theorem}
\newtheorem{proposition}{Proposition}
\newtheorem{corollary}{Corollary}
\newtheorem{lemma}{Lemma}

\theoremstyle{definition}

\theoremstyle{remark}
  \newtheorem{remark}{Remark}

\numberwithin{equation}{section}

\hyphenation{ mi-ni-mal e-xam-ples de-fi-ni-tion cons-truc-tions
cri-ti-cal bet-ween cons-truc-ted pro-blem ma-ni-folds o-rien-ted
ge-ne-ra-ted iso-me-try ta-king sy-mme-tric geo-me-try
cha-rac-te-ris-tic o-pe-ra-tor}

\begin{document}

\title[Second variation of one-sided complete minimal surfaces]{Second variation of one-sided complete minimal surfaces}

\author{Francisco Urbano}
\address{Departamento de Geometr\'{\i}a  y Topolog\'{\i}a \\
Universidad de Granada \\
18071 Granada, SPAIN}
\email{furbano@ugr.es}

\thanks{Research partially supported by a MCyT-Feder research project MTM2007-61775 and the Junta Andalucía Grants P06-FQM-01642.}

\subjclass[2010]{Primary 53C40, 53C42}

\keywords{stability, minimal surfaces, index}

\date{}
\begin{abstract}
The stability and the index of complete one-sided minimal surfaces of certain $3$-dimensional Riemannian manifolds with positive scalar curvature are studied.
\end{abstract}

\maketitle

\section{Introduction}
The study of the second variation of the volume of minimal submanifolds into Riemannian manifolds can be considered as a classical problem in differential geometry. In fact, the operator of the second variation (the Jacobi operator) carries the information about the stability properties of the submanifold when it is thought as a stationary point for the volume functional. An important particular case in this setting is when the submanifold is a hypersurface. In this case, the rank of the normal bundle is one, and we can consider two different situations: the hypersurface is two-sided, i.e., the normal bundle is trivial, or the hypersurface is one-sided, i.e., the normal bundle is non-trivial. In the first case, we can define a global unit normal vector field which trivializes the normal bundle. Then, the Jacobi operator, which acts on the sections of the normal bundle, becomes a Schrödinger operator acting on functions. This case, perhaps the easiest one, has been studied by many people (see \cite{DRR, DP, FC, FCS, LR, P, U} and references therein). For one-sided minimal hypersurfaces and for minimal submanifolds with high codimension, only a few particular situations have been considered (see \cite{MW, MU, O, Oh, R, Rs, Si, TU} and references therein).

In the present work, we are interested in the second variation of complete minimal surfaces of Riemannian $3$-manifolds. Many interesting results are well-known for {\it compact} surfaces. So, when $M$ is the $3$-sphere $\s^3$, Simons \cite{Si} proved that the index of any compact mininal surface of $\s^3$ is at least one (in particular there are no stable ones) and the totally geodesic equators are the only ones with index one. Later, Urbano \cite{U} classified the orientable compact minimal surfaces of $\s^3$ with index less than six, proving that the surface must be either an equator or the Clifford torus which has index five.  When $M$ is the real projective space $\r\p^3$, Onhita \cite{O} proved that its only stable compact minimal surface is the totally geodesic real projective plane, and later, Do Carmo, Ritoré and Ros \cite{DRR} characterized the totally geodesic two-sphere and the Clifford torus as the only orientable two-sided compact minimal surfaces of $\r\p^3$ with index one.

For {\it complete two-sided} minimal surfaces in $3$-manifolds, the starting point was the characterization of the plane as the only complete stable two-sided minimal surface in the Euclidean space $\r^3$ (see \cite{FCS, DP, P}). Later, López-Ros \cite{LR} proved that the catenoid and the Enneper's surface are the only two-sided complete minimal surfaces of $\r^3$ with index one.  Fischer-Colbrie and Schoen \cite{FCS,FC} studied in depth the second variation of complete two-sided minimal surfaces of $3$-dimensional Riemannian manifolds with non-negative scalar curvature, analyzing, on any complete Riemannian surface, the index of the Schorödinger operator $L=\Delta-K+q$, where $\Delta$ is the Laplacian, $K$ is the Gauss curvature of the surface and $q$ is a non-negative function. In fact, the Jacobi operator of such minimal surfaces can be written in the above form. Also, in this setting, it is interesting to remark \cite{FCS, LR} that  a complete two-sided minimal surface with finite index of a $3$-dimensional Riemannian manifold with scalar curvature $\rho\geq\delta>0$ must be compact.

 For {\it complete one-sided} minimal surfaces in $3$-manifolds, the above Fischer-Colbrie and Schoen theory cannot be applied, and only some particular situations have been studied. Perhaps, the most interesting paper in this direction is the Ros' one \cite{R}, where he uses new ideas and proves, among other things, that there are no complete stable one-sided minimal surfaces in $\r^3$. A partial result of this was proved by Ross in \cite{Rs}.

In this paper, we study the stability and the index of complete minimal surfaces of $\s^3$, $\s^2\times\r$ and some of their Riemannian quotients like $\r\p^3$, $\r\p^2\times\r$, $\s^2\times\s^1$ and $\r\p^2\times\s^1$. The main results in the paper can be summarized in the following ones:
\begin{quote}

{\it The index of any complete and non-compact minimal surface of $\s^3$, $\r\p^3$, $\s^2\times\r$ or $\s^2\times\s^1$ is infinite.}
\end{quote}
\begin{quote}
{\it The totally geodesic embedding $\r\p^1\times\r\subset\r\p^2\times\r$ is the only stable orientable complete and non-compact minimal surface of $\r\p^2\times\r$.}
\end{quote}
\begin{quote}
{\it The totally geodesic embedding $\s^1\times\s^1\subset\s^2\times\s^1$ is the only compact orientable minimal surface of $\s^2\times\s^1$ with index one.}
\end{quote}
We remark that, when the surface is two-sided, the first result comes from Fischer-Colbrie and Schoen theory. Also, $\r\p^1\times\r$ is an orientable {\it one-sided} minimal surface in $\r\p^2\times\r$.

The author would like to thank A. Ros for his valuable comments about the paper.

\section{Harmonic vector fields on surfaces}
As in the proofs of some results we will use harmonic vector fields as test functions, in this section we recall some properties about harmonic vector fields, which will be used along the paper.

Given an orientable Riemannian surface $\Sigma$, a vector field $X$ on $\Sigma$ is {\it harmonic} if the associated $1$-form $\omega_X$ is harmonic, i.e., $\omega_X$ is closed and coclosed. This means that $\hbox{div}\,(X)=0$ and $\nabla X$ is a symmetric tensor, where $\hbox{div}$ is the divergence on $\Sigma$ and $\nabla$ is the Levi-Civita connection on $\Sigma$.

If $\Delta^{\Sigma}$ is the Laplacian on $\Sigma$ acting on vector fields and $X$ is a harmonic vector field, it is easy to check that
\begin{equation}\label{eq:harmonic}
\Delta^{\Sigma}X=KX,
\end{equation}
where $K$ is the Gauss curvature of $\Sigma$. Also, if $J$ is the complex structure on the Riemann surface $\Sigma$, then $X$ is harmonic if and only if $JX$ is harmonic. Let $H(\Sigma)$ be the space of square integrable harmonic vector fields on $\Sigma$. Then, if $\Sigma$ is compact of genus $g$, we have that $\hbox{dim}\,H(\Sigma)=2g$. If $\Sigma$ is non-compact, then $\hbox{dim}\,H(\Sigma)\geq 2\hbox{genus}\,(\Sigma)$, including the case where the genus of $\Sigma$ is infinite. (see \cite{FK}, pag.42).

If $\Sigma$ is a non-orientable Riemannian surface and $(\tilde{\Sigma},\tau)$ is its two-fold oriented covering, where $\tau$ is the change of sheet, then any harmonic vector field $X$ on $\tilde{\Sigma}$ decomposes as $X=X^++X^-$, where $X^+$ and $X^-$ are harmonic vector fields satisfying $\tau_*X^+=X^+$, $\tau_*X^-=-X^-$ and $JX^-=X^+$. In this case,
$H(\tilde{\Sigma})=H^+(\tilde{\Sigma})\oplus H^-(\tilde{\Sigma})$, where $H^{\pm}(\tilde{\Sigma})=\{X\in H(\tilde{\Sigma})\,|\,\tau_*X=\pm X\}$ and $J:H^+(\tilde{\Sigma})\rightarrow H^-(\tilde{\Sigma})$ is an isomorphism.

 It is interesting to remark an easy property which will be used in the paper. {\it Given a $k$-dimensional subspace $V$ of $H(\Sigma)$, there exists an integrable function $h$ on $\Sigma$ such that $|X|^2\leq h$ for any $X\in V$ with  $\int_\Sigma |X|^2=1$}. In fact if $\{V_1,\dots,V_k\}$ is a $L^2$-orthonormal basis of $V$, then $X=\sum_{i=1}^k\lambda_iV_i$ with $\sum\lambda_i^2=1$. Now it is clear that $h$ can be taken as the integrable function $k^2\max\{\langle V_i,V_j\rangle,\,1\leq i,j\leq k\}$.

\section{Jacobi operator}

Let $\Phi:\Sigma\rightarrow (M^3,\langle,\rangle)$ be a minimal immersion of a surface $\Sigma$ in a $3$-dimensional Riemannian manifold $M$. The Jacobi operator of the second variation of the area is a strongly elliptic operator acting on sections of the normal bundle, $L:\Gamma(T^{\bot}\Sigma)\rightarrow \Gamma(T^{\bot}\Sigma)$, given by
\[
L=\Delta^{\bot}+|\sigma|^2+Ric(n),
\]
where $\Delta^{\bot}$ is the normal Laplacian, $\sigma$ is the second fundamental form of $\Phi$ and Ric(n) is the Ricci curvature of any unit normal vector $n$.

If $\Omega$ is a compact domain of $\Sigma$, the operator $L$, with zero boundary conditions, has a discrete spectrum $\lambda_1(\Omega)<\lambda_2(\Omega)<\dots\rightarrow\infty$, and the dimension of each eigenspace is finite. The index of $L$ in $\Omega$, $\Index(L,\Omega)$, is the sum of the dimensions of the eigenspaces corresponding to negative eigenvalues.

The {\it index of the minimal} immersion $\Phi:\Sigma\rightarrow M$ is the index of the operator $L$ on $\Sigma$, which is defined by
\[
\Index(\Phi)=\Index(L):=\sup\{\Index(L,\Omega)\,|\,\Omega\;\hbox{compact domain of}\;\Sigma\}.
\]
The minimal immersion $\Phi$ is called {\it stable} if $\Index(\Phi)=0$. This means that the quadratic form  $Q:\Gamma_0(T^{\perp}\Sigma)\rightarrow\r$ associated to $L$
$$
Q(\eta)=-\int_{\Sigma}\langle L\eta,\eta\rangle\,dv=\int_{\Sigma}\{|\nabla^{\bot}\eta|^2-(|\sigma|^2
+Ric(n))|\eta|^2\}dA\geq 0 ,
$$
for any compactly supported normal section $\eta\in\Gamma_0(T^{\perp}\Sigma)$.

The immersion $\Phi$ is called {\it two-sided} if $T^{\bot}\Sigma$ is trivial, i.e., there exists a global unit normal vector field $N$. Otherwise, i.e., when $T^{\bot}\Sigma$ is non-trivial, the immersion is called {\it one-sided}. When the ambient manifold $M$ is orientable, $\Phi$ is two-sided if and only if $\Sigma$ is orientable. This property is not true when $M$ is non-orientable.

If $\Phi$ is two-sided, sections of the normal bundle can be identified with functions on the surface in the following way:
\begin{eqnarray*}
\Gamma(T^{\perp}\Sigma)&\equiv& C^{\infty} (\Sigma)\\
\eta&\equiv& f,\quad\quad\hbox{if}\,\,  \eta=fN,
\end{eqnarray*}
where $N$ is a global unit normal section to $\Phi$. In this case, it is clear that $\Delta^{\perp}\eta=(\Delta f)N$, and hence the Jacobi operator becomes a Schr\"odinger operator acting on functions
$L:C^{\infty}(\Sigma)\rightarrow C^{\infty}(\Sigma)$, given by
\[
L=\Delta+|\sigma|^2+Ric(N)=\Delta-K+(|\sigma|^2+\rho)/2,
\]
where $K$ is the Gauss curvature of $\Sigma$, $\rho$ scalar curvature of $M$ and we have used the Gauss equation of $\Phi$ to obtain the second expression of $L$.

When $M$ is the $3$-dimensional unit sphere $\s^3$ or the $3$-dimensional real projective space $\r\p^3$, the Jacobi operator is given by
\[
L=\Delta^{\bot}+|\sigma|^2+2,
\]
whereas if $M$ is the Riemannian product $\s^2\times\r$, $\r\p^2\times\r$, or their quotients $\s^2\times\s^1(r)$, $\r\p^2\times\s^1(r)$, the Jacobi operator is
\[
L=\Delta^{\bot}+|\sigma|^2+|\xi^{\top}|^2,
\]
where $\s^1(r)$ is the circle of radius $r$, $\xi$ is a unit parallel vertical vector field on the ambient manifold and $\top$ stands for the tangential component.

Now, we will expose some background about minimal surfaces, which will be used later. Let $\Phi=(\phi,\psi):\Sigma\rightarrow \s^2\times\r$ (respectively in $\s^2\times\s^1(r), \r\p^2\times\r,\r\p^2\times\s^1(r)$) be a minimal immersion of a surface $\Sigma$ and denote also by $\langle,\rangle$ the induced metric. If $\bar{R}$ denotes the curvature operator of the ambient $3$-manifold, it is easy to prove that $\bar{R}(e_1,e_2,e_2,e_1)=1-|\xi^{\top}|^2$, where $\{e_1,e_2\}$ is an orthonormal basis on $\Sigma$. So, the Gauss equation of $\Phi$ can be written as
\[
K=1-|\xi^{\top}|^2-\frac{|\sigma|^2}{2}.
\]

If $\Sigma$ is orientable and  $z=x+iy$ is a conformal parameter with induced metric $e^{2u}|dz|^2$ and $\partial_z=(\partial_x-i\partial_y)/2$, $\partial_{\bar{z}}=(\partial_x+i\partial_y)/2$ are the corresponding complex operators, then it is well-known that
\[
\Theta(z)=\langle\Phi_z,\xi\rangle\,dz
\]
is a globally defined holomorphic $1$-differential on $\Sigma$. As $|\Theta|^2=e^{2u}|\xi^{\top}|^2/2$, we have that either $\xi^{\top}=0$ (i.e. $\Theta\equiv 0$) or $\{p\in\Sigma\,|\,\xi^{\top}(p)=0\}$ is isolated. In the first case, $\xi$ is normal to $\Phi$, and then $d\psi(v)=0$ for any tangent vector $v$, i.e. $\psi$ is constant. Moreover $\phi:\Sigma\rightarrow \s^2$ (respectively $\phi:\Sigma\rightarrow \r\p^2$) is a local isometry. In this case we will say that $\Sigma$ is a slice. The Gauss equation says that the slices are totally geodesic surfaces.

\begin{lemma}\label{lm:Jacobi-harmonic}
Let $\Phi:\Sigma\rightarrow (M^3,\langle,\rangle)$ be a minimal immersion of an orientable surface $\Sigma$ and $X$  any harmonic vector field on $\Sigma$.
\begin{enumerate}
\item If $M=\s^3$, then $LX=2X+2\langle\sigma,\nabla X\rangle N,$
\item If $M=\s^2\times\r$, then
\[
\langle LX,X\rangle=(2-|\xi^{\top}|^2)\langle X,\xi\rangle^2,
\]
\item If $M=\s^2\times\s^1(r)$, then
\[
\langle LX,X\rangle=\left(2-(1+\frac{1}{r^2})|\xi^{\top}|^2\right)\langle X,\xi\rangle^2,
\]
\end{enumerate}
where $L$ is the Jacobi operator of the two-sided minimal immersion $\Phi$ and $X$ is being considered as a $\r^4$-valuated function in (1) and (2) and as a $\r^5$-valuated function in (3).
\end{lemma}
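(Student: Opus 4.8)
The plan is to regard $X$ as a map into the Euclidean space $\r^4$ (or $\r^5$ in case (3)) containing the ambient manifold $M$, and to use that under this identification the Jacobi operator becomes the Schr\"odinger operator $\Delta+|\sigma|^2+\Ric(n)$ acting on each Euclidean coordinate of $X$, where $\Delta$ is the trace-of-Hessian Laplacian of $\Sigma$. Thus the whole computation reduces to evaluating the $\r^n$-valued Laplacian $\Delta X=\sum_i D_{e_i}D_{e_i}X$ at a point where $\{e_1,e_2\}$ is a geodesic orthonormal frame and $D$ is the flat connection of $\r^n$.

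First I would expand $D_{e_i}X$ through the two nested second fundamental forms of the chain $\Sigma\subset M\subset\r^n$, writing $D_{e_i}X=\nabla_{e_i}X+\sigma(e_i,X)+\mathrm{II}(e_i,X)$, where $\sigma$ is the second fundamental form of $\Sigma$ in $M$ and $\mathrm{II}$ that of $M$ in $\r^n$. For $M=\s^3$ one has $\mathrm{II}(Y,Z)=-\langle Y,Z\rangle\Phi$; for $M=\s^2\times\r$ only the curved factor contributes, $\mathrm{II}(Y,Z)=-(\langle Y,Z\rangle-\langle Y,\xi\rangle\langle Z,\xi\rangle)\mathbf a$ with $\mathbf a$ the $\s^2$-radial unit normal; and for $M=\s^2\times\s^1(r)$ there is in addition the $\s^1$-term $-\tfrac1r\langle Y,\xi\rangle\langle Z,\xi\rangle\mathbf b$. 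Differentiating once more and sorting the outcome into its parts tangent to $\Sigma$, normal to $\Sigma$ in $M$, and normal to $M$, the tangential part of $\Delta X$ equals $\Delta^\Sigma X-\sum_iA_{\sigma(e_i,X)}e_i$ plus the tangential contributions of $\mathrm{II}$. The first summand is $KX$ by the harmonicity equation \eqref{eq:harmonic}, and $\sum_iA_{\sigma(e_i,X)}e_i=\tfrac12|\sigma|^2X$ because the shape operator, being trace-free by minimality, satisfies $A_N^2=\tfrac12|\sigma|^2\,\mathrm{Id}$ by Cayley--Hamilton.

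Next I would substitute the Gauss equation to eliminate $K$: for $\s^3$ it reads $K=1-\tfrac12|\sigma|^2$, and for the products $K=1-|\xi^\top|^2-\tfrac12|\sigma|^2$, as recalled before the lemma. For (1) it then remains to identify the part of $\Delta X$ normal to $\Sigma$ in $\s^3$; this is $\big(\sum_ih(e_i,\nabla_{e_i}X)+\sum_ie_i(h(e_i,X))\big)N$, which the Codazzi equation in the space form $\s^3$ (equivalently $\mathrm{div}\,A_N=\nabla H=0$) collapses to $2\langle\sigma,\nabla X\rangle N$, while the radial $\Phi$-component cancels because $\mathrm{div}\,X=0$. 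Adding the potential $|\sigma|^2+2$ gives $LX=2X+2\langle\sigma,\nabla X\rangle N$. For (2) and (3) only $\langle LX,X\rangle$ is claimed, so only the part of $\Delta X$ tangent to $\Sigma$ survives pairing with $X$; contracting it with $X$ and using the elementary identities $\sum_i\langle e_i,X\rangle^2=|X|^2$, $\sum_i\langle e_i,\xi\rangle^2=|\xi^\top|^2$ and $\sum_i\langle e_i,X\rangle\langle e_i,\xi\rangle=\langle X,\xi\rangle$, then adding the potential $|\sigma|^2+|\xi^\top|^2$, produces exactly $(2-|\xi^\top|^2)\langle X,\xi\rangle^2$ in case (2) and the same expression with the extra $\tfrac1{r^2}|\xi^\top|^2$ in case (3).

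The main obstacle is the bookkeeping in this three-layer decomposition: correctly extracting the contribution of the ambient form $\mathrm{II}$, and of its derivatives $D_{e_i}\mathbf a$, $D_{e_i}\mathbf b$, to the part of $\Delta X$ tangent to $\Sigma$, since these are precisely the terms that generate the $|\xi^\top|^2$ and the $\tfrac1{r^2}|\xi^\top|^2$ factors distinguishing (2) from (3). Everything else is the harmonicity equation, Cayley--Hamilton for the trace-free shape operator, Codazzi (needed only for the normal term in (1)), and the Gauss equation.
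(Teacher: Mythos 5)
Your proposal is correct and is essentially the paper's own proof: the paper likewise regards $X$ as an $\r^4$- or $\r^5$-valued function, expands the flat connection as $\nabla^0_vX=\nabla_vX+\sigma(v,X)+\bar{\sigma}(v,X)$ along the chain $\Sigma\subset M\subset\r^n$, and combines the harmonicity equation \eqref{eq:harmonic}, the identity $A_N^2=\tfrac{1}{2}|\sigma|^2\,\mathrm{Id}$ for the trace-free shape operator, and the Gauss equation to reach the general formula $LX=(\rho/2)X+2\langle\sigma,\nabla X\rangle N+\sum_{i=1}^2\{-\bar{A}_{\bar{\sigma}(e_i,X)}e_i+2\bar{\sigma}(e_i,\nabla_{e_i}X+\sigma(e_i,X))\}$ before substituting the explicit second fundamental forms of the three ambient spaces. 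The only difference is presentational: you specialize $\mathrm{II}$ and its derivatives $D_{e_i}\mathbf{a}$, $D_{e_i}\mathbf{b}$ from the outset (and invoke Codazzi for the normal part in case (1)), which is precisely the bookkeeping the paper compresses into its final sentence as ``easy to prove''.
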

\begin{proof}
We consider $\s^3, \s^2\times\r\subset\r^4$ and $\s^2\times\s^1(r)\subset\r^5$. If $\nabla^0$ is the connection on $\r^4$ or $\r^5$ and $\bar{\sigma}$ the second fundamental form of $M$ in $\r^4$ or $\r^5$, then
\[
\nabla^0_vX=\nabla_vX+\sigma(v,X)+\bar{\sigma}(v,X),
\]
and so, using ~\eqref{eq:harmonic}
\[
\Delta^0X=(K-|\sigma|^2/2)X+2\langle\sigma,\nabla X\rangle N+\sum_{i=1}^2\{-\bar{A}_{\bar{\sigma}(e_i,X)}e_i+2\bar{\sigma}(e_i,\nabla_{e_i}X+\sigma(e_i,X))\}
\]
where $\{e_1,e_2\}$ is an orthonormal reference on $\Sigma$ and $\bar{A}$ is the Weingarten endomorphism of $M$ in $\r^4$ or $\r^5$. Hence,
\[
LX=(\rho/2)X+2\langle\sigma,\nabla X\rangle N+\sum_{i=1}^2\{-\bar{A}_{\bar{\sigma}(e_i,X)}e_i+2\bar{\sigma}(e_i,\nabla_{e_i}X+\sigma(e_i,X))\}.
\]
Now, using the expressions of the second fundamental forms of these three manifolds in $\r^4$ and $\r^5$, it is easy to prove the Lemma.
\end{proof}
\begin{lemma}\label{lm:campos-armonicos}
Let $\Phi:\Sigma\rightarrow \s^2\times\r$ (respectively $\s^2\times\s^1(r)$) be a minimal immersion of an orientable surface $\Sigma$. If $\Phi$ is not a slice, then
\begin{enumerate}
\item The tangential component $\xi^{\top}$ of the vertical vector field $\xi$ is a harmonic vector field on $\Sigma$ with only a discrete number of zeroes. Moreover, the harmonic vector field $J\xi^{\top}$ satisfies $\langle J\xi^{\top},\xi\rangle=0$ and any vector field $X$ on $\Sigma$ can be written, almost everywhere, as $X=f \xi^{\top}+gJ\xi^{\top}$, for certain functions $f$ and $g$.
\item If $(\Sigma,\tau)\rightarrow \Sigma_0$ is the two-fold oriented covering of a non-orientable surface $\Sigma_0$ and $\Phi$ is the lift of a minimal immersion $\Phi_0:\Sigma_0\rightarrow\s^2\times\r$ (respectively $\s^2\times\s^1(r)$), then $\tau_*\xi^{\top}=\xi^{\top}$ and $\tau_*J\xi^{\top}=-J\xi^{\top}$.
\end{enumerate}
\end{lemma}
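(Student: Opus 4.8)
The plan is to handle the two parts separately, the analytic core being the harmonicity of $\xi^{\top}$ in part (1). Since $\Sigma$ is orientable and the ambient space is orientable, $\Phi$ is two-sided, so I would fix a global unit normal $N$ and write $\xi=\xi^{\top}+hN$ with $h=\langle\xi,N\rangle$. The decisive structural fact is that the vertical field $\xi$ is parallel in $\s^2\times\r$ (respectively $\s^2\times\s^1(r)$), so $\bar{\nabla}_v\xi=0$ for every tangent $v$, where $\bar{\nabla}$ is the ambient Levi-Civita connection. Splitting this identity into its tangential and normal parts through the Gauss and Weingarten formulas, and using that the normal bundle is a line bundle so that $\nabla^{\bot}N=0$, the tangential part gives the clean formula $\nabla_v\xi^{\top}=hAv$, where $A$ is the shape operator of $\Phi$; the normal part records $vh=-\langle Av,\xi^{\top}\rangle$, which I will not need.

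With $\nabla_v\xi^{\top}=hAv$ in hand the harmonicity is immediate: $A$ is self-adjoint, so $\nabla\xi^{\top}$ is a symmetric tensor, and $\mathrm{div}(\xi^{\top})=\mathrm{tr}(hA)=h\,\mathrm{tr}(A)=0$ because $\Phi$ is minimal. The remaining claims of part (1) are then formal. The zeroes of $\xi^{\top}$ are discrete because $|\Theta|^2=e^{2u}|\xi^{\top}|^2/2$ and, $\Phi$ not being a slice, $\Theta$ is a nonzero holomorphic $1$-differential, whose zeroes are isolated. That $J\xi^{\top}$ is harmonic is exactly the $J$-invariance of harmonicity recalled in Section 2, and $\langle J\xi^{\top},\xi\rangle=\langle J\xi^{\top},\xi^{\top}\rangle=0$ since $J$ is rotation by a right angle, so $J\xi^{\top}\perp\xi^{\top}$. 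Finally, on the open dense set where $\xi^{\top}\neq 0$ the pair $\{\xi^{\top},J\xi^{\top}\}$ is an orthogonal frame, so any $X$ decomposes as $X=f\xi^{\top}+gJ\xi^{\top}$ with $f=\langle X,\xi^{\top}\rangle/|\xi^{\top}|^2$ and $g=\langle X,J\xi^{\top}\rangle/|\xi^{\top}|^2$, defined almost everywhere.

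For part (2) the key is that $\Phi$ factors as $\Phi=\Phi_0\circ\pi$ through the covering projection $\pi\colon\Sigma\to\Sigma_0$, and $\pi\circ\tau=\pi$ because $\tau$ is the change of sheet; hence $\Phi\circ\tau=\Phi$ and, differentiating, $\Phi_*\circ\tau_*=\Phi_*$. Evaluating at $\tau(p)$, this gives $\Phi_*\big((\tau_*\xi^{\top})(\tau(p))\big)=\Phi_*(\xi^{\top}(p))$, which is the orthogonal projection of $\xi$ at $\Phi(p)$ onto the plane $\Phi_*(T_p\Sigma)$; on the other hand $\Phi_*(\xi^{\top}(\tau(p)))$ is the projection of $\xi$ at the same point $\Phi(\tau(p))=\Phi(p)$ onto $\Phi_*(T_{\tau(p)}\Sigma)=\Phi_*(T_p\Sigma)$, the same plane. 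Since $\Phi_*$ is injective I conclude $\tau_*\xi^{\top}=\xi^{\top}$. The relation $\tau_*J\xi^{\top}=-J\xi^{\top}$ then follows because $\tau$ is an orientation-reversing isometry, so $d\tau\circ J=-J\circ d\tau$, and applying this to $\xi^{\top}$ together with the previous identity yields the sign.

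The step I expect to demand the most care is the Gauss-Weingarten bookkeeping that turns $\bar{\nabla}_v\xi=0$ into $\nabla_v\xi^{\top}=hAv$; once this formula is secured, the symmetry of $\nabla\xi^{\top}$ and the vanishing of its divergence (where minimality enters) are one-line consequences, and every other assertion reduces to the facts about harmonic vector fields and orientation-reversing isometries already assembled in Section 2.
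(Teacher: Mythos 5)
Your proposal is correct and follows essentially the same route as the paper: your formula $\nabla_v\xi^{\top}=hAv$ is just the paper's key identity $\langle\nabla_v\xi^{\top},w\rangle=\langle\sigma(v,w),\xi\rangle$ written with the global unit normal, and harmonicity, the discreteness of zeroes via the holomorphic differential $\Theta$, and part (2) via $\Phi\circ\tau=\Phi$ together with $\tau_*\circ J=-J\circ\tau_*$ all match the paper's argument. You merely supply more of the Gauss--Weingarten bookkeeping and the injectivity argument for $\tau_*\xi^{\top}=\xi^{\top}$ that the paper leaves implicit.
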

\begin{proof}
Using that $\xi$ is a parallel vector field, it is clear that for any $v,w$ tangent to $\Sigma$ we have that
\[
\langle\nabla_v\xi^{\top},w\rangle=\langle\sigma(v,w),\xi\rangle.
\]
This means that $\xi^{\top}$ is a harmonic vector field on $\Sigma$. Also, the harmonic vector field $J\xi^{\top}$ is perpendicular to $\xi$, and so $\{(\xi^{\top})_p,(J\xi^{\top})_p\}$ are linearly independent on $\{p\in\Sigma\,|\,\xi^{\top}(p)\not= 0\}$. This proves (1).

If $\Pi:\Sigma\rightarrow\Sigma_0$ is the projection, then $\Phi=\Phi_0\circ\Pi$ and so $\Phi\circ\tau=\Phi$. This implies that $\tau_*\xi^{\top}=\xi^{\top}$. Also, as $\tau_*\circ J=-J\circ\tau_*$, we have that $\tau _*J\xi^{\top}=-J\xi^{\top}$.

\end{proof}
\section{Statement and proof of the main results}
In this section we will use some results which appear explicitly in \cite{FCS, FC, LR} or follow from them. For completeness we will next describe them.
The next result can be proved following the same arguments like in Proposition 2 of \cite{FC}, and so we omit the proof.
\begin{proposition}[\cite{FC}]
Let $\Sigma$ be an orientable complete Riemannian surface and $\tau$ an isometry of $\Sigma$ without fixed points and with $\tau^2=Id$. Let $L=\Delta +q$ be a Schrödinger operator on $\Sigma$ with $q\circ\tau=q$ and consider the operator $$L^-=L_{| C^{\infty}_-(\Sigma)}:C^{\infty}_-(\Sigma)\rightarrow C^{\infty}_-(\Sigma),$$ where $C^{\infty}_-(\Sigma)=\{f\in C^{\infty}(\Sigma)\,|\,f\circ\tau=-f\}$. Then $L^-$ has finite index $k$ if and only if there exists a $k$-dimensional subspace $W$ of $L^2_-(\Sigma)$ having an orthonormal basis $\{v_1,\dots,v_k\}$ with $Lv_i+\lambda_iv_i=0,\,\lambda_i<0$ and $Q(f)\geq 0$ for any function $f\in C^{\infty}_0(\Sigma)\cap W^{\bot}$ with $f\circ\tau=-f$.
\end{proposition}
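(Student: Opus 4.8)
The plan is to reduce everything to the classical argument of Fischer-Colbrie, carried out equivariantly on the anti-invariant sector. First I would observe that, since $\tau$ is an isometry and $q\circ\tau=q$, the pull-back operator $f\mapsto f\circ\tau$ commutes with $L=\Delta+q$; hence $L$ preserves the splitting $C^{\infty}(\Sigma)=C^{\infty}_+(\Sigma)\oplus C^{\infty}_-(\Sigma)$ and $L^-$ is a well-defined self-adjoint Schr\"odinger operator on the anti-invariant functions. Because $\tau$ has no fixed points and $\tau^2=\mathrm{Id}$, one can choose a $\tau$-invariant exhaustion $\Omega_1\subset\Omega_2\subset\cdots$ of $\Sigma$ by compact domains (for instance by symmetrizing the sublevel sets of a proper function), so that $\Index(L^-)=\sup_j\Index(L^-,\Omega_j)$, where each domain index counts the negative Dirichlet eigenvalues of $L$ among anti-invariant functions.

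For the direct implication, assume $\Index(L^-)=k<\infty$. By domain monotonicity the numbers $\Index(L^-,\Omega_j)$ increase to $k$, so for $j$ large the anti-invariant Dirichlet problem on $\Omega_j$ has exactly $k$ negative eigenvalues $\lambda_1^j\le\cdots\le\lambda_k^j<0$ with $L^2$-orthonormal anti-invariant eigenfunctions $v_i^j$, extended by zero. The eigenvalues are monotone in $j$, hence converge to limits $\lambda_i<0$, and interior elliptic estimates together with a diagonal subsequence give $C^{\infty}_{\mathrm{loc}}$-convergence $v_i^j\to v_i$ on $\Sigma$, where $Lv_i+\lambda_iv_i=0$ and $v_i\circ\tau=-v_i$. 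Since each $\lambda_i$ lies strictly below the bottom of the essential spectrum of $L$ (finiteness of the index forces the essential spectrum to be nonnegative), the $v_i$ are genuine $L^2$ eigenfunctions, so $W=\mathrm{span}\{v_1,\dots,v_k\}\subset L^2_-(\Sigma)$ with $\{v_i\}$ orthonormal. That $Q(f)\ge 0$ for $f\in C^{\infty}_0(\Sigma)\cap W^{\perp}$ anti-invariant is the contrapositive of the index bound: if $Q(f)<0$, then because $\int f v_i=0$ all cross terms $-\int f\,Lv_i=\lambda_i\int f v_i$ vanish, so $Q$ is negative definite on the $(k+1)$-dimensional space $W\oplus\r f$; truncating the $v_i$ by cut-offs produces compactly supported anti-invariant functions on which $Q$ is still negative definite, forcing $\Index(L^-)\ge k+1$, a contradiction.

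For the converse, assume such a $W$ exists. Using $Lv_i=-\lambda_iv_i$ and orthonormality one computes $Q\big(\sum c_iv_i\big)=\sum\lambda_i c_i^2<0$, so $Q$ is negative definite on $W$; the logarithmic cut-off trick again yields a $k$-dimensional space of compactly supported anti-invariant functions with $Q<0$, whence $\Index(L^-)\ge k$. Conversely, if the index were $\ge k+1$ there would be a $(k+1)$-dimensional space $V\subset C^{\infty}_0(\Sigma)\cap C^{\infty}_-(\Sigma)$ on which $Q<0$; the $k$ linear conditions $\langle f,v_i\rangle=0$ then have a nonzero solution $f\in V\cap W^{\perp}$, and $Q(f)<0$ contradicts the hypothesis. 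Hence $\Index(L^-)=k$.

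The step I expect to be the main obstacle is the passage between global $L^2$-eigenfunctions and the quadratic form on compactly supported functions: one must verify both that the domain eigenfunctions $v_i^j$ converge to $L^2$-limits with no loss of mass (so that orthonormality and the strict negativity of the $\lambda_i$ survive) and that truncating the $v_i$ by suitable cut-offs changes $Q$ by an arbitrarily small amount. This is precisely where Fischer-Colbrie's finiteness-of-index machinery (control of the essential spectrum and the logarithmic cut-off estimate) is needed, and it is the only part not formally identical to the two-sided case; the $\tau$-equivariance is harmless, since $L$, the exhaustion, and all cut-offs can be chosen $\tau$-invariant, keeping every construction inside $C^{\infty}_-(\Sigma)$.
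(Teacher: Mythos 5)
Your proposal is correct and takes essentially the same approach as the paper: the paper in fact omits the proof entirely, saying only that it ``can be proved following the same arguments like in Proposition 2 of \cite{FC}'', and your equivariant adaptation of Fischer-Colbrie's argument ($\tau$-invariant exhaustions and cut-offs, convergence of the domain eigenfunctions, and the orthogonality/truncation dichotomy for both implications) is exactly that intended adaptation. The one point to tighten is that monotonicity of the $\lambda_i^j$ alone does not rule out $\lambda_i=-\infty$; semiboundedness of $L$, which follows from finite index via stability outside a compact set (Proposition 1 of \cite{FC}), is needed here, and this is subsumed in the Fischer-Colbrie machinery you correctly flag as the essential analytic input.
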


\begin{theorem}[\cite{FCS, FC, LR}]
Let $\Sigma$ be a complete Riemannian surface, $L=\Delta-K+q$ a Schr\"odinger operator on $\Sigma$, where $K$ is the Gauss curvature of $\Sigma$ and $q\geq 0$.
\begin{enumerate}
\item If $\Sigma$ is orientable and $\Index(L)=0$, then: either $\Sigma$ is conformally equivalent to the sphere $\s^2$ or the complex plane $\c$, or $q=0$ and $\Sigma$ is either a a flat torus  or flat cylinder.
\item If $q\geq c>0$ for some constant $c$, and there exists a compact set $C\subset\Sigma$ such that $\Index(L)=0$ on $\Sigma-C$, then $\Sigma$ is compact.
\end{enumerate}
\end{theorem}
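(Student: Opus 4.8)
The engine for both parts is the Fischer--Colbrie--Schoen correspondence between stability and positive solutions, followed by a conformal change of metric. First I would establish the standard fact that $\Index(L)=0$ forces the existence of a function $u>0$ on $\Sigma$ with $Lu=0$, i.e. $\Delta u=(K-q)u$ (Fischer--Colbrie, \cite{FC}): exhausting $\Sigma$ by compact domains $\Omega_k$, stability makes the principal Dirichlet eigenvalue of $L$ on each $\Omega_k$ nonnegative, so taking the positive first eigenfunctions normalized at a fixed point and passing to a Harnack limit (with elliptic estimates) yields a positive solution on all of $\Sigma$. Writing $u=e^{\phi}$ and denoting the metric of $\Sigma$ by $g$, the identity $\Delta\phi=(K-q)-|\nabla\phi|^{2}$ gives, for the conformal metric $\tilde g=u^{2}g=e^{2\phi}g$,
\[
\tilde K=e^{-2\phi}(K-\Delta\phi)=u^{-2}\bigl(q+|\nabla\phi|^{2}\bigr)\ge 0 ,
\]
and in fact $\tilde K\ge u^{-2}q$, which is strictly positive wherever $q>0$.

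For part (1), the decisive point is that $\tilde g$ is a \emph{complete} metric (equivalently, that the conformal class of $\Sigma$ is parabolic); this is the technical heart, which I would take from \cite{FCS}, proved by a differential inequality for $u$ along divergent paths together with a capacity estimate showing that $u$ cannot be integrable out to infinity. Granting completeness, $(\Sigma,\tilde g)$ is a complete orientable surface with $\tilde K\ge 0$, hence by the soul theorem it is diffeomorphic to $\s^{2}$, a torus, $\r^{2}$, or a cylinder, and parabolicity fixes the conformal type as $\s^{2}$, $\c$, a torus, or a cylinder. The first two are the asserted alternatives. In the torus case Gauss--Bonnet gives $\int\tilde K=2\pi\chi=0$, and in the cylinder case Cohn--Vossen gives $\int\tilde K\le 2\pi\chi=0$; since $\tilde K\ge 0$, both force $\tilde K\equiv 0$, whence $q\equiv 0$ and $\phi$ is constant, so $u$ is constant and $K\equiv 0$. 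Thus $\Sigma$ is a flat torus or a flat cylinder with $q=0$, as claimed.

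For part (2), suppose $\Sigma$ were non-compact. Stability outside $C$ makes $L$ of finite index, so by the preceding Proposition (and \cite{FC}) there is a positive solution $u$ on each end, and the same conformal change now yields $\tilde K\ge u^{-2}q\ge c\,u^{-2}>0$ \emph{strictly}. Running the part-(1) dichotomy on the ends, the strict positivity rules out the flat torus/cylinder alternatives, so each end is parabolic of plane type with finite total curvature; then $\tilde K\,d\tilde A\ge c\,dA$ gives $c\,\mathrm{Area}(E)\le\int_{E}\tilde K\,d\tilde A<\infty$, so the end has finite area. The remaining step, which I expect to be the main obstacle, is to exclude a complete, non-compact, finite-area end carrying such a stable operator: here $q\ge c>0$ together with parabolicity of the end lets one insert logarithmic cut-offs $f_{k}\to 1$ with $\int|\nabla f_{k}|^{2}\to 0$ into the quadratic form $Q$ and produce $Q(f_{k})<0$, contradicting stability on the end. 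Hence $\Sigma$ has no ends and is compact. Throughout I would lean on \cite{FCS, FC} for the completeness/capacity input in part (1) and for the precise destabilization estimate in part (2), since these are exactly the technical cores of that theory.
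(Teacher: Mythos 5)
The paper never proves this theorem: it is quoted wholesale from \cite{FCS,FC,LR} (``we will next describe them''), so your sketch has to be measured against those sources rather than against an in-paper argument. Your part (1) does follow the Fischer--Colbrie--Schoen template (index zero yields $u>0$ with $Lu=0$; the conformal metric $\tilde{g}=u^{2}g$ has $\tilde{K}=u^{-2}(q+|\nabla\log u|^{2})\geq 0$; parabolicity plus Gauss--Bonnet/Cohn--Vossen rigidity), and your curvature computation and the torus/cylinder rigidity step are correct. But the step you yourself call the technical heart --- completeness of $\tilde{g}$ --- is simply asserted, with a vague attribution to \cite{FCS} of a ``differential inequality plus capacity estimate'' lemma that is not available there in that form. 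This is not a citable black box in a blind proof: a positive solution can a priori decay along an end, making $u^{2}g$ incomplete, and an incomplete nonnegatively curved metric on the disk (e.g.\ the flat disk) carries no conformal information whatsoever, so without completeness the entire dichotomy collapses; the delicate completeness/conformal-type input is precisely the content one must reprove or quote accurately (for ends it is the heart of \cite{FC}). Two smaller slips: the soul theorem is superfluous in dimension two (uniformization plus Blanc--Fiala/Huber parabolicity suffice), and on a compact $\Sigma$ with $\lambda_{1}>0$ there is \emph{no} positive solution of $Lu=0$ at all --- the compact case is instead settled directly by $f\equiv 1$, Gauss--Bonnet, and the standard $\log u$ integration for flatness.

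The genuine flaw is the endgame of part (2). You propose to insert logarithmic cut-offs $f_{k}$ on the end and ``produce $Q(f_{k})<0$, contradicting stability on the end.'' This can never work, and it contradicts your own setup: $\Sigma-C$ is stable by hypothesis, and indeed the positive solution $u$ you constructed on the end certifies it, since the ground-state substitution $f=uh$ gives $Q(f)=\int u^{2}|\nabla h|^{2}\geq 0$ for \emph{every} compactly supported $f$ on the end. So no test function supported outside $C$ can make $Q$ negative, while if your $f_{k}$ are allowed to spill over $C$ there is no stability hypothesis left to contradict. The arguments in \cite{FC} (as used in \cite{LR} and in the present paper) derive compactness instead from the geometry of the conformal metric: completeness of $\tilde{g}$ on the end together with $\tilde{K}\,d\tilde{A}\geq c\,dA$ and a Cohn--Vossen/Huber bound, leading to a contradiction with the completeness and noncompactness of the end --- no destabilization occurs anywhere. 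Your intermediate inequality $c\,\mathrm{Area}_{g}(E)\leq\int_{E}\tilde{K}\,d\tilde{A}<\infty$ is correct and is indeed part of that circle of ideas, but finite $g$-area of a complete end is not by itself absurd (hyperbolic cusps are complete with finite area), so as proposed the proof of (2) stops short at exactly the point where the real work begins.
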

From here, we obtain that
\begin{corollary}[\cite{FC, LR}]
{\it If $\Phi:\Sigma\rightarrow M^3$ is a two-sided minimal immersion of a complete and non-compact surface in a Riemannian manifold $M$ with scalar curvature $\rho\geq c>0$, then $\Index\,(\Phi)=\infty$. }
\end{corollary}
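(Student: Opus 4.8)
The plan is to reduce the statement to part (2) of the preceding Theorem. Since $\Phi$ is two-sided, the Jacobi operator is (as recalled in the discussion of the Jacobi operator) the Schr\"odinger operator
\[
L=\Delta-K+q,\qquad q=\frac{|\sigma|^2+\rho}{2},
\]
acting on $C^{\infty}(\Sigma)$. Because $|\sigma|^2\geq 0$ and $\rho\geq c>0$, the potential satisfies $q\geq \rho/2\geq c/2>0$, so $L$ is precisely of the type covered by the Theorem, with the constant $c/2$ playing the role of $c$.

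I would argue by contradiction, assuming $\Index(\Phi)=\Index(L)=k<\infty$. The key step is to produce a compact set outside of which $L$ is stable. Choose a compact domain $\Omega\subset\Sigma$ with $\Index(L,\Omega)=k$; such an $\Omega$ exists because $\Index(L,\Omega)$ is nondecreasing in $\Omega$ (extension by zero embeds test functions on a smaller domain into a larger one) and its supremum over compact domains is finite and hence attained. I then claim that $Q(f)\geq 0$ for every $f\in C_0^{\infty}(\Sigma-\overline{\Omega})$. If this failed, there would be such an $f$ with $Q(f)<0$; extended by zero it has support disjoint from $\Omega$, so it is $Q$-orthogonal to the $k$ eigenfunctions associated with the negative eigenvalues of $L$ on $\Omega$ (also extended by zero). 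These $k+1$ functions then span a subspace on which $Q$ is negative definite, and choosing a compact domain $\Omega'$ containing all their supports yields $\Index(L,\Omega')\geq k+1$, contradicting $\Index(L)=k$.

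Hence $\Index(L)=0$ on $\Sigma-\overline{\Omega}$, and part (2) of the Theorem applies with $C=\overline{\Omega}$, forcing $\Sigma$ to be compact. This contradicts the hypothesis that $\Sigma$ is non-compact, and therefore $\Index(\Phi)=\infty$.

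I expect the disjoint-support orthogonality argument --- that is, the fact that finite index forces stability off a compact set --- to be the only substantive point; everything else is a direct substitution into the cited Theorem. The one place requiring mild care is that the negative eigenfunctions on $\Omega$, once extended by zero, are legitimate competitors for the quadratic form on the larger domain $\Omega'$, which holds because they lie in $C_0^{\infty}(\Omega')$ and $Q$ is additive over functions with disjoint supports.
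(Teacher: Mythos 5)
Your proof is correct and takes essentially the same route as the paper: the paper likewise writes the Jacobi operator as the Schr\"odinger operator $L=\Delta-K+(\rho+|\sigma|^2)/2$, quotes Proposition 1 of \cite{FC} for the statement that finite index yields a compact set $C$ with $\Sigma-C$ stable, and concludes by Theorem 1,(2); you merely inline the standard disjoint-supports proof of that cited step instead of citing it. One cosmetic correction: the zero-extensions of the Dirichlet eigenfunctions on $\Omega$ are Lipschitz elements of $H^1_0(\Omega')$ rather than of $C_0^{\infty}(\Omega')$, but the min-max characterization of the eigenvalues (or a routine smoothing that perturbs $Q$ arbitrarily little) makes your comparison argument go through unchanged.
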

\begin{proof}
As $\Phi$ is two-sided, the Jacobi operator is the Schrödinger operator $L=\Delta-K+(\rho+|\sigma|^2)/2$. If $\Index(\Phi)<\infty$, from Proposition 1 in \cite{FC}, there exists a compact set $C\subset\Sigma$ such that $\Sigma-C$ is stable. Now the result follows from Theorem 1,(2).
\end{proof}
Using similar ideas, we can extend Corollary 1 to a certain family of one-sided complete minimal surfaces.
\begin{corollary}\label{cor:finite-genus}
Let $\Phi:\Sigma\rightarrow M^3$ be a one-sided minimal immersion of a complete and non-compact surface $\Sigma$ in an orientable Riemannian manifold $M$ with scalar curvature $\rho\geq c>0$. If the genus of the $2$-fold oriented covering of $\Sigma$ is finite, then $\Index(\Phi)=\infty$.
\end{corollary}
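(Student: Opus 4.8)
The plan is to lift everything to the oriented double cover and reduce to the equivariant Fischer--Colbrie framework of Proposition 1; the whole difficulty is to upgrade the ``antisymmetric'' stability one gets there to genuine stability, and this is exactly where the finite genus hypothesis enters. Since $M$ is orientable and $\Phi$ is one-sided, $\Sigma$ is non-orientable; I let $\pi\colon\tilde{\Sigma}\to\Sigma$ be its oriented two-fold covering, with deck transformation $\tau$ a fixed-point-free, orientation-reversing isometric involution. The lift $\tilde{\Phi}=\Phi\circ\pi$ is two-sided with global unit normal $N$ satisfying $N\circ\tau=-N$, and $\tilde{\Sigma}$ is complete, non-compact, and of finite genus by hypothesis. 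Under the identification $\eta=fN\leftrightarrow f$, the sections of $T^{\perp}\Sigma$ correspond precisely to the antisymmetric functions $f\circ\tau=-f$ on $\tilde{\Sigma}$, and the Jacobi operator of $\Phi$ becomes $\tilde{L}^{-}=\tilde{L}|_{C^{\infty}_{-}(\tilde{\Sigma})}$, where $\tilde{L}=\Delta-K+q$ is the scalar Jacobi operator of $\tilde{\Phi}$, with $q=(\rho+|\sigma|^2)/2\geq c/2>0$ and $q\circ\tau=q$. Thus $\Index(\Phi)=\Index(\tilde{L}^{-})$, and I argue by contradiction, assuming $\Index(\tilde{L}^{-})=k<\infty$.

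Next I would extract stability at infinity. Applying Proposition 1 to $\tilde{L}$ and $\tau$, finiteness of the index produces a $k$-dimensional subspace $W\subset L^{2}_{-}(\tilde{\Sigma})$ spanned by $L^{2}$ eigenfunctions with negative eigenvalues such that $Q(f)\geq 0$ for every antisymmetric $f\in C^{\infty}_{0}(\tilde{\Sigma})\cap W^{\perp}$. Since these eigenfunctions lie in $L^{2}$, the same reasoning used in Corollary 1 (now with Proposition 1 in place of Proposition 1 of \cite{FC}) yields a $\tau$-invariant compact set $C\subset\tilde{\Sigma}$ such that $Q(f)\geq 0$ holds for \emph{all} antisymmetric $f\in C^{\infty}_{0}(\tilde{\Sigma}-C)$; that is, $\tilde{L}$ is stable outside $C$ when tested against antisymmetric functions.

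The crux is to remove the word ``antisymmetric''. Enlarging $C$ so that it absorbs all the topology, finite genus lets me arrange that $\tilde{\Sigma}-C$ is a disjoint union of annular ends $S^{1}\times[0,\infty)$. Because $\tau$ is orientation-reversing and fixed-point-free, it cannot carry an annular end to itself: on each circle $S^{1}\times\{t\}$ near infinity it would act as an orientation-reversing circle map, which necessarily has a fixed point. Hence $\tau$ permutes the ends in genuine pairs $E_{\alpha}\leftrightarrow\tau E_{\alpha}$ with $\tau E_{\alpha}\neq E_{\alpha}$. Given any $f\in C^{\infty}_{0}(\tilde{\Sigma}-C)$, I split it into its restrictions to the ends and, on each pair, form the antisymmetric function $g$ equal to $f$ on $E_{\alpha}$ and to $-f\circ\tau$ on $\tau E_{\alpha}$. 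Then $Q(g)\geq 0$, while $\tau$-invariance of the integrand gives $Q(g)=2\,Q(f|_{E_{\alpha}})$, so $Q(f|_{E_{\alpha}})\geq 0$; summing over the finitely many ends meeting $\operatorname{supp}f$ gives $Q(f)\geq 0$ for every $f\in C^{\infty}_{0}(\tilde{\Sigma}-C)$. Thus $\tilde{L}$ is genuinely stable outside $C$, and since $q\geq c/2>0$, part (2) of Theorem 1 forces $\tilde{\Sigma}$ to be compact, contradicting its non-compactness; hence $\Index(\Phi)=\infty$.

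The main obstacle is the bridge in the third paragraph: controlling the $\tau$-action on the ends. Finite genus is precisely what makes the ends annular, and the combination orientation-reversing plus fixed-point-free is what forbids a $\tau$-fixed end and thereby lets antisymmetric stability be promoted to full stability. The delicate bookkeeping I expect to be most careful about is choosing $C$ so that each end and its $\tau$-image sit in disjoint components (so the cut-and-paste producing $g$ is legitimate) and checking that infinitely many ends cause no trouble, which they do not since any fixed compactly supported test function meets only finitely many of them.
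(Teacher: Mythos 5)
Your overall architecture --- lift to the oriented cover, apply the equivariant Fischer--Colbrie statement (Proposition 1), then upgrade antisymmetric stability at infinity to genuine stability by pairing ends --- is a different route from the paper's, but it has a genuine gap exactly at the step you yourself flag as the crux. Finite genus of $\tilde{\Sigma}$ does \emph{not} imply that the complement of a suitable compact set is a finite union of annular ends: a complete surface of genus zero can have infinitely many ends, even a Cantor set of ends, so no compact set need leave only pieces homeomorphic to $\s^1\times[0,\infty)$. Moreover, your fixed-point argument fails even for a genuinely annular end: there is no reason for $\tau$ to map each circle $\s^1\times\{t\}$ to itself, so you cannot invoke the fact that an orientation-reversing self-map of a circle has a fixed point. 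What your pairing argument actually requires is that the non-orientability of $\Sigma$ be confined to a compact set, i.e., that there exist a compact $C_0\subset\Sigma$ with $\Sigma-C_0$ orientable. This is precisely Lemma 9 of Ros \cite{R}, which the paper quotes, and it is a real lemma --- it does not follow from soft end topology. Granting it, your cut-and-paste does work: with $C=\Pi^{-1}(C_0)$, each component $E$ of $\tilde{\Sigma}-C$ projects to an orientable component of $\Sigma-C_0$, whose preimage is a trivial double cover, so $\tau E\cap E=\emptyset$; the antisymmetric extension $g=f$ on $E$, $g=-f\circ\tau$ on $\tau E$ is then legitimate and yields $Q(f)\geq 0$ for every $f\in C^{\infty}_0(\tilde{\Sigma}-C)$.

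Two further remarks. First, your passage from Proposition 1 to ``antisymmetric stability outside a $\tau$-invariant compact set'' is not automatic from the eigenfunctions merely being $L^2$; one must run the exhaustion argument behind Proposition 1 of \cite{FC} equivariantly (the odd index of $\tau$-invariant domains stabilizes), which is routine but should be stated. Second, once Lemma 9 of \cite{R} is invoked, the whole equivariant detour becomes unnecessary, and this is what the paper does: since $M$ is orientable, $\Phi$ restricted to $\Sigma-C_0$ is already two-sided, so the Jacobi operator there is the Schr\"odinger operator $\Delta-K+(\rho+|\sigma|^2)/2$ acting on functions on $\Sigma$ itself; finite index then yields a compact $K$ with $\Sigma-(C_0\cup K)$ stable by Proposition 1 of \cite{FC}, and Theorem 1,(2) (with $q\geq c/2>0$) forces $\Sigma$ to be compact, a contradiction --- no lifting or cut-and-paste is needed. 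So your approach, repaired by citing Ros's lemma where you currently have the faulty annular-ends claim, is correct but strictly longer than the paper's argument downstairs.
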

\begin{remark}
As $\Phi$ is one-sided and $M$ is orientable, $\Sigma$ is not orientable. Also, the orientability of the ambient manifold is necessary in the assumptions, because $\r\p^1\times\r$ is an orientable complete surface of genus zero, which is embedded in $\r\p^2\times\r$ as a stable minimal one-sided surface (see Theorem~\ref{thm:estable}).
\end{remark}
\begin{proof}
Let $(\tilde{\Sigma},\tau)$ be the $2$-fold oriented covering of $\Sigma$ with $\tau$ the change of sheet on $\tilde{\Sigma}$. As the genus of $\tilde{\Sigma}$ is finite, Lemma 9 in \cite{R} implies that there exists a compact subset $C\subset\Sigma$, such that $\Sigma-C$ is orientable. Hence, as $M$ is orientable, $\Phi:\Sigma-C\rightarrow M$ is a two-sided minimal immersion and so the Jacobi operator on $\Sigma-C$ is $L=\Delta-K+(\rho+|\sigma|^2)/2$.

If $ \Index (\Sigma)$ is finite, then $\Index(\Sigma-C)$ is finite too. So, from Proposition 1 in \cite{FC}, there exists a compact subset $K\subset \Sigma-C$, such that $\Sigma-(C\cup K)$ is stable. Hence $\Sigma$ is a complete surface and the Schr\"odinger operator $L=\Delta-K+(\rho+|\sigma|^2)/2$ on $\Sigma$ satisfies that $\Index (L)=0$ on $\Sigma-(C\cup K)$. Theorem 1,(2) says that $\Sigma$ must be compact. This proves the Corollary.
\end{proof}
Hence, for orientable ambient $3$-manifolds with scalar curvature $\rho\geq c>0$, the remaining case to study is when the two-fold oriented covering of the one-sided complete and non-compact minimal surface has infinite genus. We have not obtained a general result, but following \cite{R} and using harmonic vector fields as test functions, we have finished the study in some particular three-manifolds.
\begin{theorem}\label{thm:indice-infinito}
 Let $\Phi:\Sigma\rightarrow M$ be a minimal immersion of a complete surface $\Sigma$ in a $3$-dimensional Riemannian manifold $M$.
\begin{enumerate}
\item If $M=\s^3$ and $\Sigma$ is not compact, then $\Index (\Phi)=\infty$.
\item If $M=\s^2\times\r$, then either $\Sigma=\s^2$, $\Phi(\Sigma)=\s^2\times\{t\},\,t\in\r$ and $\Phi$ is stable, or
$\Index(\Phi)=\infty$.
\item If $M=\s^2\times\s^1(r),\,r\geq 1$ and $\Sigma$ is not compact, then $\Index (\Phi)=\infty$.
\end{enumerate}
\end{theorem}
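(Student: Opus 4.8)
The plan is to reduce all cases but one to the two corollaries, and to settle that last case with the harmonic vector fields of Lemma~\ref{lm:Jacobi-harmonic}. Each ambient manifold has scalar curvature bounded below by a positive constant ($\rho=6$ for $\s^3$, and $\rho=2$ for $\s^2\times\r$ and $\s^2\times\s^1(r)$) and is orientable, so Corollary~1 already yields $\Index(\Phi)=\infty$ whenever $\Sigma$ is orientable (hence two-sided) and non-compact, while Corollary~\ref{cor:finite-genus} yields it whenever $\Sigma$ is non-orientable and the genus of its oriented double cover is finite. For $M=\s^2\times\r$ I would also observe that if $\Sigma$ is compact then its height function is harmonic, because its gradient is $\xi^{\top}$ and $\mathrm{div}\,\xi^{\top}=0$ by Lemma~\ref{lm:campos-armonicos}; hence the height is constant, $\Phi(\Sigma)$ is a totally geodesic slice $\s^2\times\{t\}$ with $L=\Delta$, and $\Phi$ is stable. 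After these reductions the only surviving situation in the three parts is that $\Sigma$ is one-sided and the oriented double cover $(\tilde\Sigma,\tau)$ has infinite genus; this is where the hypothesis $r\ge 1$ in part (3) will be needed.

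On $\tilde\Sigma$ the immersion $\Phi$ is two-sided, with a global unit normal $N$ satisfying $N\circ\tau=-N$, so normal sections of $\Phi$ on $\Sigma$ correspond to anti-invariant functions and $\Index(\Phi)=\Index(L^{-})$ in the notation of Proposition~1. The test fields come from $H^{-}(\tilde\Sigma)$: since $\Phi\circ\tau=\Phi$ as an $\r^4$- (resp.\ $\r^5$-) valued map, every $X\in H^{-}(\tilde\Sigma)$ satisfies $X\circ\tau=-X$ as a vector-valued function, so each Euclidean component $X_a$ lies in $C^{\infty}_{-}(\tilde\Sigma)$ and is an admissible test section; moreover $\dim H^{-}(\tilde\Sigma)=\infty$ because the genus is infinite. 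Writing $\vec Q(X)=\sum_a Q(X_a)=-\int_{\tilde\Sigma}\langle LX,X\rangle$, Lemma~\ref{lm:Jacobi-harmonic} gives $\langle LX,X\rangle=2|X|^2$ on $\s^3$ (using $X\perp N$), and $\langle LX,X\rangle=(2-|\xi^{\top}|^2)\langle X,\xi\rangle^2$ resp.\ $(2-(1+\tfrac1{r^2})|\xi^{\top}|^2)\langle X,\xi\rangle^2$ on the two products; since $|\xi^{\top}|^2\le 1$, the coefficient is non-negative precisely because $r\ge 1$ in part (3). Hence $\vec Q(X)<0$ for every non-zero $X\in H^{-}$ on $\s^3$, and for every $X\in H^{-}$ with $\langle X,\xi\rangle\not\equiv 0$ on the products. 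In the product cases the exceptional set $\{X\in H^{-}:\langle X,\xi\rangle\equiv 0\}$ consists of fields pointwise parallel to $J\xi^{\top}$, hence is at most one-dimensional (a real-valued meromorphic ratio of holomorphic differentials is constant), so it is avoided by choosing the test space inside a complement.

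Thus for each $d$ I can pick a $d$-dimensional subspace $V\subset H^{-}(\tilde\Sigma)$ on which $\vec Q$ is negative definite. To make the fields compactly supported I would use the uniform integrable majorant $h$ of Section~2 together with $\tau$-invariant cut-offs $\varphi$ (distance cut-offs from a $\tau$-invariant compact set keep $\varphi X_a$ anti-invariant): taking $\varphi\equiv 1$ on $B_R$ and decaying across $B_{2R}\setminus B_R$ gives $\int|\nabla\varphi|^2|X|^2\le (C/R^2)\int_{\tilde\Sigma\setminus B_R}h\to 0$ uniformly for unit $X\in V$, so $\vec Q(\varphi X)\to\vec Q(X)$ uniformly on the compact unit sphere of $V$. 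For $R$ large $\varphi V$ is still $d$-dimensional (a harmonic field vanishing on $B_R$ is zero) and $\vec Q$ remains negative definite there. Viewing the componentwise operator $\vec L$ as $n$ copies of $L^{-}$, a $d$-dimensional negative-definite subspace forces $n\,\Index(\Phi)=\Index(\vec L^{-})\ge d$, so $\Index(\Phi)\ge d/n$; letting $d\to\infty$ gives $\Index(\Phi)=\infty$.

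The main obstacle is this last step: extracting genuine compactly supported negative directions from the square-integrable but non-compactly supported harmonic fields, uniformly over a subspace whose dimension must be driven to infinity. The sign identities of Lemma~\ref{lm:Jacobi-harmonic} only control the integrated quadratic form, so the whole weight of the argument falls on the uniform cut-off estimate, which is exactly what the integrable majorant of Section~2 is designed to provide, while the $\tau$-invariance of the cut-offs is what preserves the one-sided (anti-invariant) constraint at every stage.
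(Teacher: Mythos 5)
Your proposal is correct in its analytic core but organizes the proof by a genuinely different logical route. The paper argues by contradiction: assuming $\Index(\Phi)=k<\infty$, it invokes Proposition 1 to produce odd eigenfunctions $v_1,\dots,v_k$ with negative eigenvalues, fixes $V\subset H^{-}(\tilde\Sigma)$ of dimension $l>4k$ (resp.\ $l>5k$) chosen $L^2$-orthogonal to $J\xi^{\top}$ when the latter is square integrable, and for each cut-off $\varphi_n$ picks $X_n$ in the kernel of the map $F_n$ recording the pairings of $\varphi_n X$ against the $v_i$ componentwise, so that $\varphi_nX_n$ is admissible and $\tilde{Q}(\varphi_nX_n)\geq 0$; passing to a limit $X=\lim X_n$ in the finite-dimensional $V$ yields the contradiction. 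You instead run the min-max argument forwards: for every $d$ you exhibit a $d$-dimensional subspace $\varphi V$ of compactly supported anti-invariant sections on which the componentwise quadratic form is negative definite, conclude that the index of the componentwise operator is at least $d$, and divide by $n=4$ or $5$. The two schemes are dimension-counting duals (the paper's $l>nk$ kernel trick mirrors your $\Index(\Phi)\geq d/n$), and they consume identical inputs: the identities of Lemma~\ref{lm:Jacobi-harmonic}, the identification of the exceptional direction $J\xi^{\top}$ via Lemma~\ref{lm:campos-armonicos} (your meromorphic-ratio remark is exactly what makes that subspace at most one-dimensional, and it handles uniformly the paper's case split on whether $J\xi^{\top}\in L^2$), the $\tau$-equivariance bookkeeping, the cut-off identity, and the integrable majorant of Section 2. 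Your version buys a cleaner structure --- no contradiction hypothesis, no Proposition 1, no extraction of a convergent subsequence $X_n\to X$, since uniformity over the compact unit sphere of $V$ replaces that step --- and your unique-continuation remark ensuring $\dim\varphi V=d$ is needed and correct.

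There is, however, one configuration in part (3) that your negative-definiteness claim misses: $r=1$ with $|\xi^{\top}|^2\equiv 1$. There the coefficient $2-(1+\frac{1}{r^2})|\xi^{\top}|^2$ vanishes identically, so $\langle LX,X\rangle\equiv 0$ and your quadratic form is identically zero on all of $H^{-}(\tilde\Sigma)$; the assertion that it is negative on every $X$ with $\langle X,\xi\rangle\not\equiv 0$ fails, and no negative-definite subspace exists. The paper excludes this case geometrically rather than analytically: $|\xi^{\top}|\equiv 1$ forces $\tilde\Sigma$ to be a covering of the totally geodesic torus $\s^1\times\s^1$, and such coverings cannot have infinite genus, contradicting the standing reduction. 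You should insert this observation before choosing $V$. After it, strict negativity does hold where you claim it: for $r>1$ the coefficient is bounded below by $2-(1+\frac{1}{r^2})>0$, while for $r=1$ in the non-degenerate case one has $2-2|\xi^{\top}|^2=2\langle N,\xi\rangle^2$ with $\langle N,\xi\rangle$ a nontrivial Jacobi function, whose nodal set has measure zero (all data here are real-analytic), so $\int(2-2|\xi^{\top}|^2)\langle X,\xi\rangle^2>0$ whenever $\langle X,\xi\rangle\not\equiv 0$ and the rest of your argument goes through unchanged.
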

\begin{proof}
As the only compact minimal surfaces of $\s^2\times\r$ are the slices $\s^2\times\{t\},\,t\in\r$, which are stable, then Corollaries 1 and 2 imply that, we can assume that $\Phi$ is one-sided, i.e., $\Sigma$ is not orientable and that the two-fold oriented covering $\tilde{\Sigma}$ of $\Sigma$ has infinite genus.

Let $\tau$ be the change of sheet in $\tilde{\Sigma}$ and $\Pi:\tilde{\Sigma}\rightarrow\Sigma$ the projection. Then, $\tilde{\Phi}=\Phi\circ\Pi$ is also a minimal immersion which is two-sided. Let $\tilde{N}$ be a global unit normal vector field to $\tilde{\Phi}$. As $\Phi$ is one-sided, $\tilde{N}\circ\tau=-\tilde{N}$. Also, we can identify sections of the normal bundle of $\Phi$, $\Gamma(T^{\bot}\Sigma)$, with functions on $\tilde{\Sigma}$ which are odd with respect to $\tau$:
\begin{eqnarray}
\Gamma(T^{\bot}\Sigma)&\equiv& C^{\infty}_-(\tilde{\Sigma})=\{f\in C^{\infty}(\tilde{\Sigma})\,|\,f\circ\tau=-f\}\\
\eta&\equiv& f,\quad\quad \hbox{if}\,\,\tilde{\eta}=f\tilde{N},
\end{eqnarray}
where $\tilde{\eta}$ is the lift of $\eta$ to $\tilde{\Phi}$ . Also, if $\tilde{L}=\tilde{\Delta}-\tilde{K}+\rho/2+|\tilde{\sigma}|^2/2$ is the Jacobi operator of $\tilde{\Phi}$, it is clear that
\[
2Q(\eta)=\tilde{Q}(\tilde{\eta})=\tilde{Q}(f)=-\int_{\tilde{\Sigma}}f\tilde{L}f\,d\tilde{A},
\]
for any compactly supported $\eta\in\Gamma(T^{\bot}\Sigma)$.

To prove the result, we suppose that $\Index(\Phi)=k$ and we will find a contradiction.

From Proposition 1, there exist $L^2(\tilde{\Sigma})$-functions $\{v_1,\dots,v_k\}$ with $v_i\circ\tau=-v_i$ and $\tilde{L}v_i+\lambda_iv_i=0,\,\lambda_i<0$, and such that, if $f\in C_0^{\infty}(\tilde{\Sigma})$ satisfies $f\circ\tau=-f$ and is $L^2$-orthogonal to $v_i,\,1\leq i\leq k$, then
\[
\tilde{Q}(f)\geq 0.
\]
As the genus of $\tilde{\Sigma}$ is infinite, the space $H(\tilde{\Sigma})$ of $L^2$-harmonic vector fields on $\tilde{\Sigma}$ has also infinite dimension. So, let $V$ be a $l$-dimensional ($l>4k$ if $M=\s^3$ or $\s^2\times\r$ and $l>5k$ if $M=\s^2\times\s^1(r)$) subspace of $H^-(\tilde{\Sigma})$. When $M=\s^2\times\r\quad\hbox{or}\quad\s^2\times\s^1(r)$, we will take $V$ with another restriction. In fact, in these cases, if the harmonic vector field $J\xi^{\top}$ (see Lemma~\ref{lm:campos-armonicos}) satisfies $\int_{\tilde{\Sigma}}|J\xi^{\top}|^2<\infty$, i.e., $J\xi^{\top}\in H^-(\tilde{\Sigma})$, we will take $V$ being $L_2$-orthogonal to the line spanned by $J\xi^{\top}$. If $\int_{\tilde{\Sigma}}|J\xi^{\top}|^2=\infty$, we will not impose more restrictions to $V$.

 Let $\{U_n\,|\,n\in\n\}$ be an exhaustion of the complete surface $\tilde{\Sigma}$ and $\{\varphi_n\,|\, n\in \n\}$ cut-off functions on $\tilde{\Sigma}$ with $\hbox{supp}\,(\varphi_n)\subset U_n$, $|\nabla\varphi_n|^2\leq 1$ and $\varphi_n\circ\tau=\varphi_n$.

For each $n\in\n$, let $F_n:V\rightarrow \r^{4k}$ (respectively $\r^{5k}$) be the linear map given by
\[
F_n(X)=\left(\int_{\tilde{\Sigma}}\varphi_nv_1X,\dots,\int_{\tilde{\Sigma}}\varphi_nv_kX\right).
\]
As $l>4k$ (respectively $l>5k$), let $X_n\in \ker F_n$ with $\int_{\tilde{\Sigma}}|X_n|^2=1$. So, for each $n$, the function $\varphi_nX_n$ has compact support, is $L^2$-orthogonal to $v_i,\,1\leq i\leq  k$ and, as $X_n\in H^-(\tilde{\Sigma})$, we have that $\varphi_nX_n\circ\tau=-\varphi_nX_n$. Hence
\[
\tilde{Q}(\varphi_nX_n)\geq 0,\quad \forall n\in\n.
\]
Now,
\begin{eqnarray*}
\tilde{Q}(\varphi_nX_n)=-\int_{\tilde{\Sigma}}\varphi_n\tilde{\Delta}\varphi_n|X_n|^2-\int_{\tilde{\Sigma}}\frac{1}{2}\langle\tilde{\nabla}\varphi_n^2,\tilde{\nabla}|X_n|^2\rangle
-\int_{\tilde{\Sigma}}\varphi_n^2\langle \tilde{L}X_n,X_n\rangle.
\end{eqnarray*}
As $\hbox{div}\,(|X_n|^2\tilde{\nabla}\varphi_n^2)=\langle\tilde{\nabla}\varphi_n^2,\tilde{\nabla}|X_n|^2\rangle+2(\varphi_n\tilde{\Delta}\varphi_n+|\tilde{\nabla}\varphi_n|^2)|X_n|^2$, we finally obtain that
\[
\int_{\tilde{\Sigma}}\varphi_n^2\langle \tilde{L}X_n,X_n\rangle\leq\int_{\tilde{\Sigma}}|\tilde{\nabla}\varphi_n|^2|X_n|^2,\quad \forall n\in\n.
\]
Using Lemma~\ref{lm:Jacobi-harmonic}, we obtain that
\[
\int_{\tilde{\Sigma}}|\tilde{\nabla}\varphi_n|^2|X_n|^2\geq
\begin{cases}
 2\int_{\tilde{\Sigma}}\varphi_n^2|X_n|^2,\quad\hbox{when}\quad M=\s^3,\\
 \int_{\tilde{\Sigma}}\varphi_n^2(2-|\xi^{\top}|^2)\langle X_n,\xi\rangle^2,\quad\hbox{when}\quad M=\s^2\times\r,\\
 \int_{\tilde{\Sigma}}\varphi_n^2(2-(1+\frac{1}{r^2})|\xi^{\top}|^2)\langle X_n,\xi\rangle^2,\quad\hbox{when}\quad M=\s^2\times\s^1(r).
\end{cases}
\]
It is clear that the sequence of harmonic vector fields $X_n$ converges, up to extractions a subsequence, in $V$ to a harmonic vector field $X$ with $\int_{\tilde{\Sigma}}|X|^2=1$. Using the remark made at the end of Section 2, we have that
\[
 \lim\int_{\tilde{\Sigma}}|\nabla\varphi_n|^2|X_n|^2=\int_{\tilde{\Sigma}}\lim|\nabla\varphi_n|^2|X_n|^2=0,
 \]
because $|\nabla\varphi_n|^2\rightarrow 0.$

So, when $M=\s^3$,
\[
0=\lim\int_{\tilde{\Sigma}}\varphi_n^2|X_n|^2=\int_{\tilde{\Sigma}}\lim\varphi_n^2|X_n|^2=\int_{\tilde{\Sigma}}|X|^2=1,
\]
which is a contradiction.
When $M=\s^2\times\r$, we have that
\begin{eqnarray*}
0&=& \lim\int_{\tilde{\Sigma}}\varphi_n^2(2-|\xi^{\top}|^2)\langle X_n,\xi\rangle^2\\&=&\int_{\tilde{\Sigma}}\lim\varphi_n^2(2-|\xi^{\top}|^2)\langle X_n,\xi\rangle^2=\int_{\tilde{\Sigma}}(2-|\xi^{\top}|^2)\langle X,\xi\rangle^2,
\end{eqnarray*}
which implies that $\langle X,\xi\rangle=0$.

When $M=\s^2\times\s^1(r)$, we have that
\[
0= \lim\int_{\tilde{\Sigma}}\varphi_n^2(2-(1+\frac{1}{r^2})|\xi^{\top}|^2)\langle X_n,\xi\rangle^2=\int_{\tilde{\Sigma}}(2-(1+\frac{1}{r^2})|\xi^{\top}|^2)\langle X,\xi\rangle^2.
\]
As $r\geq 1$, we have that either $r = 1$ and $|\xi^{\top}|^2=1$ or $\prodesc{X}{\xi} = 0$. In the first case, $\tilde{\Sigma}$ is a covering of the totally geodesic torus $\s^1 \times \s^1$, and so it genus cannot be infinite. So, $\langle X,\xi\rangle=0$.

Hence in the last two cases and using Lemma~\ref{lm:campos-armonicos}, we obtain that the harmonic vector field $X\in V$ is given by $X=\lambda J\xi^{\top}$ for certain nonzero real number $\lambda$. Hence, $1=\lambda^2\int_{\tilde{\Sigma}}|J\xi^{\top}|^2$ and so $J\xi^{\top}\in H^-(\tilde{\Sigma})$. As, in this case, $J\xi^{\top}$ is orthogonal to $V$, we get again a contradiction. This finishes the proof.
\end{proof}
Now, we extend the results of Theorem~\ref{thm:indice-infinito} to minimal surfaces in the real projective space. Although we use a similar idea, the proof is more complicated and in it we will use different test functions.
\begin{theorem}\label{thm:proyectivo}
Let $\Phi:\Sigma\rightarrow \r\p^3$ be a minimal immersion of a complete non-compact surface. Then $\Index (\Phi)=\infty$.
\end{theorem}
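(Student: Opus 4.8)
The plan is to run the argument of Theorem~\ref{thm:indice-infinito}(1), since $\r\p^3$ is locally isometric to $\s^3$ and its Jacobi operator is again $L=\Delta^{\bot}+|\sigma|^2+2$; the one change needed is to replace the $\r^4$-valued test functions of the $\s^3$ case, which rely on the position vector, by functions adapted to the topology of $\r\p^3$. First I would reduce to the essential case. As $\r\p^3$ is orientable with scalar curvature $\rho=6>0$, Corollary 1 settles the two-sided (equivalently orientable) surfaces and Corollary~\ref{cor:finite-genus} settles the one-sided surfaces whose oriented double cover has finite genus. So I may assume $\Sigma$ is non-orientable, its oriented two-fold covering $(\tilde{\Sigma},\tau)$ has infinite genus, and, arguing by contradiction, $\Index(\Phi)=k<\infty$. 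Exactly as in Theorem~\ref{thm:indice-infinito}, sections of $T^{\bot}\Sigma$ correspond to $\tau$-odd functions on $\tilde{\Sigma}$, one has $2Q(\eta)=\tilde{Q}(f)$, and Proposition 1 provides $\tau$-odd functions $v_1,\dots,v_k\in L^2(\tilde{\Sigma})$ with $\tilde{L}v_i+\lambda_iv_i=0$, $\lambda_i<0$, such that $\tilde{Q}(f)\ge0$ for every compactly supported $\tau$-odd $f$ orthogonal to $v_1,\dots,v_k$. Since the genus is infinite, $H^-(\tilde{\Sigma})$ is infinite dimensional.

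The new ingredient is the choice of test functions. Writing $\r\p^3=\s^3/\{\pm1\}$ with $\s^3\subset\r^4$, each $A\in\mathfrak{so}(4)$ produces a Killing field $K_A(x)=Ax$ on $\s^3$ which is invariant under the antipodal map and hence descends to a Killing field on $\r\p^3$. Given a harmonic vector field $X\in H^-(\tilde{\Sigma})$, I take as test functions the pairings $f_A=\langle K_A,X\rangle$, $A\in\mathfrak{so}(4)$. Because $K_A$ is globally defined on $\r\p^3$ and $\tilde{\Phi}\circ\tau=\tilde{\Phi}$, each $f_A$ is $\tau$-odd and is therefore a legitimate normal section of $\Phi$. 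In the local $\r^4$-model one has $K_A=A\tilde{\Phi}$, so $f_A$ is an antisymmetric combination $P^{ab}=\tilde{\Phi}^aX^b-\tilde{\Phi}^bX^a$ of the coordinate products; unlike the individual coordinates $\tilde{\Phi}^a$ or $X^a$, these antisymmetric combinations descend to $\tilde{\Sigma}$.

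The computational core is an analogue of Lemma~\ref{lm:Jacobi-harmonic}: summing over the six generators $E_{ab}-E_{ba}$ of $\mathfrak{so}(4)$ I expect
\[
\sum_{a<b}\tilde{Q}(\varphi_nP^{ab})=\int_{\tilde{\Sigma}}|\tilde{\nabla}\varphi_n|^2|X|^2-2\int_{\tilde{\Sigma}}\varphi_n^2|X|^2 .
\]
This should follow from a direct computation using $|\tilde{\Phi}|^2=1$ and $\langle X,\tilde{\Phi}\rangle=0$ (as $X$ is tangent and $\tilde{\Phi}$ is normal to $\s^3$), the harmonicity relation $\Delta^{\Sigma}X=KX$ of~\eqref{eq:harmonic} (through $|\nabla X|^2+K|X|^2=\tfrac12\Delta|X|^2$, which integrates away against $\varphi_n^2$), and the Gauss equation $K=1-|\sigma|^2/2$. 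The delicate point, and the main obstacle, is that the natural but wrong choice of test functions destroys the curvature term: the full family of products $\tilde{\Phi}^aX^b$ gives $\sum_{a,b}\tilde{Q}(\varphi_n\tilde{\Phi}^aX^b)=\int|\tilde{\nabla}\varphi_n|^2|X|^2$, with the decisive $-2\int\varphi_n^2|X|^2$ absorbed by the factor $|\tilde{\Phi}|^2=1$; only the antisymmetric Killing combinations retain it. Identifying this cancellation and isolating the surviving antisymmetric part is where the real work lies.

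Finally I would run the cut-off and dimension argument of Theorem~\ref{thm:indice-infinito}. Choosing a subspace $V\subset H^-(\tilde{\Sigma})$ with $\dim V=l>6k$ and the linear map $F_n:V\to\r^{6k}$, $F_n(X)=(\int_{\tilde{\Sigma}}\varphi_nv_iP^{ab})_{i,\,a<b}$, I may pick $X_n\in\ker F_n$ with $\int_{\tilde{\Sigma}}|X_n|^2=1$; then each $\varphi_nP^{ab}$ is compactly supported, $\tau$-odd and orthogonal to all $v_i$, so $\sum_{a<b}\tilde{Q}(\varphi_nP^{ab})\ge0$. Extracting a subsequence with $X_n\to X$ in $V$ and $\int_{\tilde{\Sigma}}|X|^2=1$, the domination remark of Section 2 forces $\int_{\tilde{\Sigma}}|\tilde{\nabla}\varphi_n|^2|X_n|^2\to0$ while $\int_{\tilde{\Sigma}}\varphi_n^2|X_n|^2\to1$, so the displayed identity yields $0\le-2$, a contradiction. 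Hence $\Index(\Phi)=\infty$.
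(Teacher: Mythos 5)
Your proposal is correct, and it takes a genuinely different route from the paper's. The paper splits into two cases according to whether $\tilde{\Phi}$ lifts to $\s^3$: when it does, the proof of Theorem~\ref{thm:indice-infinito},(1) is repeated verbatim; when it does not, the individual coordinate products you mention do not even exist on $\tilde{\Sigma}$, so the paper pulls back the covering $\s^3\rightarrow\r\p^3$ to a connected two-fold covering $\hat{\Sigma}\rightarrow\tilde{\Sigma}$ (a four-fold covering of $\Sigma$ with two commuting symmetries $\tau$, $\hat{\tau}$), where $\langle\hat{\Phi},a_j\rangle$ makes sense, and uses the sixteen products $\varphi_n\langle\hat{\Phi},a_j\rangle X_n$ with $l>16k$. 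As you correctly diagnose, for that family the negative term is lost, $\sum_{j}\hat{Q}(\varphi_n\langle\hat{\Phi},a_j\rangle X_n)=\int_{\hat{\Sigma}}|\nabla\varphi_n|^2|X_n|^2$, so the paper cannot conclude as in Theorem~\ref{thm:indice-infinito}; instead it inserts an arbitrary compactly supported $Y$, applies Cauchy--Schwarz for the form $\hat{Q}$, which is semidefinite on the orthogonal complement of the $v_i$, and shows that the limit functions $\langle\hat{\Phi},a\rangle X$ are weak Jacobi fields, reaching the contradiction through $\langle\hat{L}(\langle\hat{\Phi},a\rangle X),\hat{\Phi}\rangle=-2\langle X,a\rangle\neq 0$. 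Your Killing-field pairings $P^{ab}=\Psi^aX^b-\Psi^bX^a$ avoid both the case distinction and the extra covering, since they are invariant under the sign ambiguity $\Psi\mapsto-\Psi$ of the local lift and hence globally defined and $\tau$-odd on $\tilde{\Sigma}$; and the identity you flagged as the delicate point does hold: using $\tilde{\Delta}\Psi=-2\Psi$ and Lemma~\ref{lm:Jacobi-harmonic},(1) one computes $\tilde{L}P^{ab}=2\langle\tilde{\sigma},\nabla X\rangle(\Psi^aN^b-\Psi^bN^a)+2\bigl(\langle\nabla\Psi^a,\nabla X^b\rangle-\langle\nabla\Psi^b,\nabla X^a\rangle\bigr)$; the first block pairs to zero against $P^{ab}$ because $\langle X,N\rangle=\langle\Psi,X\rangle=\langle\Psi,N\rangle=0$, while the second gives $\sum_{a<b}P^{ab}\tilde{L}P^{ab}=2|X|^2$ via $\langle\Psi,\nabla^0_{e_i}X\rangle=-\langle e_i,X\rangle$ and $\sum_a\Psi^a e_i(\Psi^a)=0$, which together with $\sum_{a<b}(P^{ab})^2=|X|^2$ is exactly your displayed formula. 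What each approach buys: the paper's argument uses only coordinate functions and the already-proved Lemma~\ref{lm:Jacobi-harmonic}, but pays with the four-fold cover, the double-symmetry bookkeeping and the subtler weak-limit step; yours is uniform over both cases, recovers the same direct quantitative contradiction $0\leq -2$ as in Theorem~\ref{thm:indice-infinito}, and even improves the dimension count from $l>16k$ to $l>6k$.
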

\begin{proof}
Following a reasoning like in the previous theorem, we can assume that $\Phi$ is one-sided, i.e., $\Sigma$ is not orientable, and that the two-fold oriented covering $(\tilde{\Sigma},\tau)$ of $\Sigma$ has infinite genus.

Then, $\tilde{\Phi}=\Phi\circ\Pi$ is a two-sided minimal immersion of $\tilde{\Sigma}$ into $\r\p^3$. We consider two cases.

First case: $\tilde{\Phi}$ admits a lift $\Psi$ to $\s^3$.
\[
\xymatrix{
(\tilde{\Sigma}, \tau)  \ar[drr]^{\tilde{\Phi},\, 2\text{-sided}} \ar[d]_{\Pi} \ar[rr]^{\Psi}    &   & \s^3 \ar[d] \\
\Sigma  \ar[rr]^{\Phi}_{1\text{-sided}}                       &   & \r\p^3
}
\]
 Then, we can identify sections of the normal bundle to $\Phi$, $\Gamma(T^{\bot}\Sigma)$, with functions on $\tilde{\Sigma}$ which are odd with respect to $\tau$:
\begin{eqnarray*}
\Gamma(T^{\bot}\Sigma)\equiv C^{\infty}_-(\tilde{\Sigma})\\
\eta\equiv f,\quad\quad \hbox{if}\quad\tilde{\eta}=f\tilde{N},
\end{eqnarray*}
where $\tilde{\eta}$ is the lift of $\eta$ to $\Psi$. Moreover the quadratic forms associated to $\Phi$ and $\Psi$ satisfy
\[
2Q(\eta)=\tilde{Q}(\tilde{\eta})=\tilde{Q}(f),
\]
for any compactly supported $f\in C^{\infty}_-(\tilde{\Sigma})$. Now, the immersion $\Psi$ is under the same conditions that the immersion $\tilde{\Phi}$ in Theorem~\ref{thm:indice-infinito},(1). So the result follows making the same proof than in Theorem~\ref{thm:indice-infinito},(1).

Second case: $\tilde{\Phi}$ cannot be lifted to $\s^3$. Then, the two-fold covering $\s^3\rightarrow\r\p^3$ induces a two-fold covering $\hat{\Sigma}\rightarrow\tilde{\Sigma}$ of a connected surface $\hat{\Sigma}$. If $\hat{\tau}$ is the change of sheet, then $\hat{\Phi}\circ\hat{\tau}=-\hat{\Phi}$ and $\hat{\Phi}$ is two-sided.

\[
\xymatrix{
(\hat{\Sigma}, \hat{\tau}) \ar[d]_{\hat{\Pi}} \ar[drr]^{\hat{\Phi}}  &   &   \\
(\tilde{\Sigma}, \tau)  \ar[drr]^{\tilde{\Phi},\, 2\text{-sided}} \ar[d]_{\Pi}     &   & \s^3 \ar[d]\\
\Sigma  \ar[rr]^{\Phi}_{1\text{-sided}}                       &   & \r\p^3
}
\]
Now, we can lift $\tau$ to $\hat{\Sigma}$ as follows: given $x\in\hat{\Sigma}$, $\tau(x)=y$ if $\tau(\hat{\Pi}(x))=\hat{\Pi}(y)$ and $\hat{\Phi}(x)=\hat{\Phi}(y)$. As $y$ is uniquely determined, the lift of $\tau$ to $\hat{\Sigma}$, which will be denoted also by $\tau$, is well-defined.

Let $N$ be a unit normal vector field to $\hat{\Phi}$. As $\tilde{\Phi}$ is two-sided and $\Phi$ is one-sided, then $N\circ\tau=-N$. Also, as $N$ projects to $\tilde{\Phi}$, then $N\circ\hat{\tau}=-N$.

Now, we can identify normal sections of $\Phi$ with functions on $\hat{\Sigma}$ with the following symmetries:
\begin{eqnarray*}
\Gamma(T^{\perp}\Sigma)&\equiv& C^{\infty}_{\pm}(\hat{\Sigma})=\{f\in C^{\infty}(\hat{\Sigma})\,|\,f\circ\hat{\tau}=f,\,f\circ\tau=-f\} \\
\eta&\equiv& f,\quad \hat{\eta}=fN,
\end{eqnarray*}
where $\hat{\eta}$ is the lift of $\eta$ to $\hat{\Phi}$. It is clear that $\hat{\eta}\circ\tau=-\hat{\eta}\circ\hat{\tau}=\hat{\eta}$, and so the function $f$ satisfies $f\circ\hat{\tau}=-f\circ\tau=f$.

If $Q$ and $\hat{Q}$ are the quadratic forms associated to the minimal immersions $\Phi$ and $\hat{\Phi}$,
\[
4Q(\eta)=\hat{Q}(\hat{\eta})=\hat{Q}(f)=-\int_{\hat{\Sigma}}f\hat{L}f,
\]
for any compactly supported function $f\in C^{\infty}_{\pm}(\hat{\Sigma})$, where
\begin{eqnarray*}
\hat{L}:C^{\infty}_{\pm}(\hat{\Sigma})\rightarrow C^{\infty}_{\pm}(\hat{\Sigma})\\
\hat{L}=\hat{\Delta}-\hat{K}+(3+|\hat{\sigma}|^2/2).
\end{eqnarray*}
To prove the result, we suppose that $\Index(\Phi)=k$ and we will find a contradiction.

 From Proposition 1, there exist $L^2(\hat{\Sigma})$-functions $\{v_1,\dots,v_k\}$ with $v_i\circ\tau=-v_i,\,v_i\circ\hat{\tau}=v_i$ and $\hat{L}v_i+\lambda_iv_i=0,\,\lambda_i<0$, and such that if $f$ is a compactly supported function in $C^{\infty}_{\pm}(\hat{\Sigma})$ which is $L^2$-orthogonal to $v_i,\,1\leq i\leq k$, then
\[
\hat{Q}(f)\geq 0.
\]
As the genus of $\hat{\Sigma}$ is infinite, the space $H(\hat{\Sigma})$ of $L^2$-harmonic vector fields on $\hat{\Sigma}$ has also infinite dimension. So, let $V$ be a $l$-dimensional ($l>16k$) subspace of $H(\hat{\Sigma})$, such that $\tau_*X=-X$ and $\hat{\tau}_*X=X$ for any $X\in V$. Let $Y$ be any compactly supported function in $C^{\infty}_{\pm}(\hat{\Sigma},\r^4)$ which is $L^2$-orthogonal to $v_j,j=1,2,3,4$. Let $\{U_n\,|\,n\in\n\}$ be an exhaustion of the complete surface $\hat{\Sigma}$ and $\{\varphi_n\,|\, n\in \n\}$ cut-off functions on $\hat{\Sigma}$ with $\hbox{supp}\,(Y)\subset\overline{\{p\in\tilde{\Sigma}\,|\,\varphi_n(p)=1\}}$, $\hbox{supp}\,(\varphi_n)\subset U_n$, $|\nabla\varphi_n|^2\leq 1$ and $\varphi_n\circ\tau=\varphi_n\circ\hat{\tau}=\varphi_n$.

Let $\{a_1,\dots,a_4\}$ be an orthonornal basis of $\r^4$. For each $n\in\n$, let $F_n:V\rightarrow \r^{16k}$ be the linear map given by
\[
F_n(X)=\left(\int_{\hat{\Sigma}}\varphi_nv_i\langle\hat{\Phi},a_j\rangle X\right)_{ij}.
\]
As $l>16k$, there exists $X_n\in \ker F_n$ with $\int_{\hat{\Sigma}}|X_n|^2=1$. So, for each $n$ and each $j\in\{1,2,3,4\}$, the functions $\varphi_n\langle\hat{\Phi},a_j\rangle X_n$ have compact support and they are $L^2$-orthogonal to $v_i,\, 1\leq i\leq k$. Using the properties of $X_n$ and $\varphi_n$ with respect to $\tau$ and $\hat{\tau}$ and the fact that $\hat{\Phi}\circ\tau=\hat{\Phi}$ and $\hat{\Phi}\circ\hat{\tau}=-\hat{\Phi}$, we obtain that $\varphi_n\langle\hat{\Phi},a_j\rangle X_n\circ\tau=-\varphi_n\langle\hat{\Phi},a_j\rangle X_n\circ\hat{\tau}=-\varphi_n\langle\hat{\Phi},a_j\rangle X_n$.

Now, for each $t\in\r$, let $F_t=\varphi_n\langle\hat{\Phi},a_j\rangle X_n+tY$. Then $F_{t}\in C^{\infty}_{\pm}(\hat{\Sigma},\r^4)$, has compact support and is $L^2$-orthogonal to $v_i,\,1\leq i\leq k$. Hence
\[
0\leq \hat{Q}(F_t),\quad \forall t\in\r.
\]

 This means that
\begin{equation}
\hat{Q}(\langle\hat{\Phi},a_j\rangle X_n,Y)^2\leq \hat{Q}(Y)\hat{Q}(\varphi_n\langle\hat{\Phi},a_j\rangle X_n),\quad \hat{Q}(Y)\geq0,\quad 1\leq j\leq 4.
\end{equation}
Making a computation like in the proof of the above theorem, we obtain that
\[
\hat{Q}(\varphi_n\langle\hat{\Phi},a_j\rangle X_n)=\int_{\hat{\Sigma}}|\hat{\nabla}\varphi_n|^2\langle\hat{\Phi},a_j\rangle^2|X_n|^2-\int_{\hat{\Sigma}}\varphi_n^2\langle \hat{L}(\langle\hat{\Phi},a_j\rangle X_n),\langle\hat{\Phi},a_j\rangle X_n\rangle,
\]
for all $n\in\n$ and $1\leq j\leq 4$.
Now, using Lemma~\ref{lm:Jacobi-harmonic} it is not difficult to see that
\begin{eqnarray*}
\langle \hat{L}(\langle\hat{\Phi},a_j\rangle X_n),\langle\hat{\Phi},a_j\rangle X_n\rangle=\langle\hat{\Phi},a_j\rangle\hat{\Delta}\langle\hat{\Phi},a_j\rangle|X_n|^2+\langle\hat{\Phi},a_j\rangle^2\langle \hat{L}X_n,X_n\rangle\\
+\frac{1}{2}\langle\hat{\nabla}\langle\hat{\Phi},a_j\rangle^2,\hat{\nabla}|X_n|^2\rangle=\frac{1}{2}\langle\hat{\nabla}\langle\hat{\Phi},a_j\rangle^2,\hat{\nabla}|X_n|^2\rangle
\end{eqnarray*}
But $\sum_{j=1}^4\langle\hat{\nabla}\langle\hat{\Phi},a_j\rangle^2,\hat{\nabla}|X_n|^2\rangle=0$, and so
\[
\sum_{j=1}^4\hat{Q}(\varphi_n\langle\hat{\Phi},a_j\rangle X_n)=\int_{\hat{\Sigma}}|\nabla\varphi_n|^2|X_n|^2.
\]
Using a similar argument like in the proof of Theorem~\ref{thm:indice-infinito}, $\lim \int_{\hat{\Sigma}}|\nabla\varphi_n|^2|X_n|^2=0$, and so from (4.3) we get
\[
0=\sum_{j=1}^4\lim\hat{Q}(\langle\hat{\Phi},a_j\rangle X_n,Y)^2=\sum_{j=1}^4\hat{Q}(\langle\hat{\Phi},a_j\rangle X,Y)^2,
\]
where $X=\lim X_n\in V$, after extracting a subsequence. As $Y$ is arbitrary, we finally get that  $\hat{L} (\langle\hat{\Phi},a\rangle X)=0,\,\forall a\in\r^4$. In particular and using Lemma~\ref{lm:Jacobi-harmonic}, we deduce that $0=\langle \hat{L}(\langle\hat{\Phi},a\rangle X),\hat{\Phi}\rangle=-2\langle X,a\rangle$, which is a contradiction.
\end{proof}
In \cite{TU} Torralbo and Urbano classified the compact stable minimal submanifolds of the product of two spheres. As a particular case of that classification,  the authors obtain that {\it the slices $\s^2\times\{p\},\,p\in\s^1(r)$ are the only compact stable minimal surfaces in $\s^2\times\s^1(r)$}. In the next result we study the index of a compact minimal surface of $\s^2\times\s^1(r)$.
\begin{theorem}\label{thm:indice-uno}
Let $\Phi:\Sigma\rightarrow\s^2\times\s^1(r),\,r\geq 1$, be a minimal immersion of a compact surface $\Sigma$.
\begin{enumerate}
\item If $\Sigma$ is orientable of genus $g$, then $\Index\,(\Phi)\geq \frac{2g-1}{5}$. Moreover $\Index\,(\Phi)=1$ if and only if $r=1$ and $\Phi$ is the totally geodesic embedding  $\s^1\times\s^1\subset\s^2\times\s^1$.
\item If $\Sigma$ is nonorientable, then $\Index\,(\Phi)\geq \frac{g-1}{5}$, where $g$ is the genus of the two-fold oriented covering of $\Sigma$.
\end{enumerate}
\end{theorem}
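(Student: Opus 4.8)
The plan is to use square-integrable harmonic vector fields on $\Sigma$ (or on its oriented cover) as test functions, exactly as in Theorem~\ref{thm:indice-infinito}, and to convert the pointwise identity of Lemma~\ref{lm:Jacobi-harmonic}(3) into a lower bound for the index through a dimension count. Recall that on a compact orientable surface of genus $g$ one has $\dim H(\Sigma)=2g$, so there is a large supply of test fields. For $X\in H(\Sigma)$ regarded as an $\r^5$-valued map and $\{a_1,\dots,a_5\}$ an orthonormal basis of $\r^5$, each component $\langle X,a_i\rangle$ is a function on $\Sigma$, hence (since $\Phi$ is two-sided when $\Sigma$ is orientable) a legitimate normal test section, and summing Lemma~\ref{lm:Jacobi-harmonic}(3) over $i$ gives
\[
\sum_{i=1}^{5}Q(\langle X,a_i\rangle)=-\int_{\Sigma}\Bigl(2-(1+\tfrac1{r^2})|\xi^{\top}|^2\Bigr)\langle X,\xi\rangle^2\,dA\le 0,
\]
the inequality because $r\ge 1$ and $|\xi^{\top}|\le 1$.

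First I would prove the lower bound. Suppose $\Index(\Phi)=k$ and let $E$ be the $k$-dimensional negative space, spanned by $v_1,\dots,v_k$, so that $Q\ge 0$ on $E^{\perp}$. Discarding the slice case (where $\Sigma=\s^2$ and the bound is vacuous), the harmonic field $J\xi^{\top}$ is nonzero, and I take a complement $V_0$ of the line $\langle J\xi^{\top}\rangle$ in $H(\Sigma)$, so $\dim V_0=2g-1$. The linear map $V_0\to\r^{5k}$, $X\mapsto(\int_\Sigma v_j\langle X,a_i\rangle)_{i,j}$, must have nontrivial kernel as soon as $2g-1>5k$; a nonzero $X$ in this kernel has every $\langle X,a_i\rangle\in E^{\perp}$, whence $\sum_i Q(\langle X,a_i\rangle)\ge 0$. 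Combined with the displayed inequality this forces $\langle X,\xi\rangle\equiv 0$: when $r>1$ the integrand factor is strictly positive, and when $r=1$ it vanishes only where $|\xi^{\top}|=1$, a set which (unless $\Sigma$ is the vertical totally geodesic torus, treated directly) has empty interior since the zeros of the holomorphic datum are isolated. By Lemma~\ref{lm:campos-armonicos} this means $X$ is pointwise parallel to $J\xi^{\top}$; identifying harmonic fields with holomorphic $1$-forms, the ratio $X/J\xi^{\top}$ is a real meromorphic function, hence constant, so $X\in\langle J\xi^{\top}\rangle$, contradicting $0\ne X\in V_0$. Therefore $2g-1\le 5k$, i.e. $\Index(\Phi)\ge(2g-1)/5$. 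The non-orientable case (2) is identical after passing to the oriented double cover $(\tilde\Sigma,\tau)$: normal sections of the one-sided $\Phi$ correspond to $\tau$-odd functions, the usable harmonic fields are those in $H^-(\tilde\Sigma)$ (which has dimension $g$), and by Lemma~\ref{lm:campos-armonicos}(2) the excluded field $J\xi^{\top}$ again lies in $H^-$; the same count gives $g-1\le 5k$.

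For the characterization of index one I would argue both directions. The totally geodesic $\s^1\times\s^1\subset\s^2\times\s^1$ is flat with $|\sigma|=0$ and $|\xi^{\top}|\equiv 1$, so its Jacobi operator is $L=\Delta+1$ on the square flat torus; since $Q(f)=\int(|\nabla f|^2-f^2)$ is negative only on the constants when $r=1$ (and picks up two further negative directions from the longer $\s^1(r)$ factor once $r>1$), this surface has index exactly $1$ precisely for $r=1$. For the converse, index one first gives $2g-1\le 5$, hence $g\le 3$. The remaining, and genuinely hardest, point is the rigidity: ruling out $g\in\{2,3\}$ and the non-totally-geodesic genus-one surfaces. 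Here I would exploit the equality case above. When $\Index(\Phi)=1$ the kernel of $X\mapsto\int_\Sigma v_1 X\in\r^5$ is forced into $\langle J\xi^{\top}\rangle$, and on surfaces of larger genus $J\xi^{\top}$ itself must satisfy $\int_\Sigma v_1\,J\xi^{\top}=0$; since $\langle J\xi^{\top},\xi\rangle=0$, each $\langle J\xi^{\top},a_i\rangle$ then lies in $E^{\perp}$ with $Q=0$, i.e. is a null direction, so $L\langle J\xi^{\top},a_i\rangle\in\langle v_1\rangle$. Feeding these into the full vector form of $L$ from the proof of Lemma~\ref{lm:Jacobi-harmonic} should force $|\xi^{\top}|\equiv 1$, so that $\Sigma$ is a vertical cylinder over a geodesic, i.e. the flat totally geodesic torus, and then the spectral computation above forces $r=1$. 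I expect establishing this last implication, turning ``index one'' into the pointwise condition $|\xi^{\top}|\equiv 1$, to be the main obstacle, since it is exactly where the soft dimension count must be replaced by hard information about the geometry of $\Phi$.
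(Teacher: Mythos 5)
Your lower-bound argument is essentially the paper's: integrate harmonic fields (from $H(\Sigma)$, resp.\ $H^-(\tilde{\Sigma})$, with $J\xi^{\top}$ set aside) against the negative eigenfunctions to get a linear map into $\r^{5k}$, use Lemma~\ref{lm:Jacobi-harmonic},(3) and $r\geq 1$ to force $\langle X,\xi\rangle=0$ for a kernel element, then Lemma~\ref{lm:campos-armonicos} to conclude $X=\lambda J\xi^{\top}$ and count dimensions ($2g\leq 1+5k$, resp.\ $g\leq 1+5k$). This part is correct, with one repair needed in your justification of the $r=1$ dichotomy: the holomorphic differential $\Theta$ controls the zeros of $\xi^{\top}$, i.e.\ the set $\{|\xi^{\top}|=0\}$, not the set $\{|\xi^{\top}|=1\}$. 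The latter is the zero set of the Jacobi function $\langle N,\xi\rangle$ (note $|\xi^{\top}|^2=1-\langle N,\xi\rangle^2$), so its having empty interior should be argued by unique continuation (or real analyticity) for $\langle N,\xi\rangle$, unless $\langle N,\xi\rangle\equiv 0$, which is exactly the vertical totally geodesic case you treat directly. The paper instead discharges this case by observing that $|\xi^{\top}|^2\equiv 1$ makes $\Sigma$ a finite covering of the torus $\s^1\times\s^1$, hence of genus one, contradicting $g\geq 2$.

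The genuine gap is the converse of the index-one statement, and you correctly flag it yourself: the claim that null directions built from $J\xi^{\top}$ ``should force $|\xi^{\top}|\equiv 1$'' is never established, and the route cannot even start for low genus. With $\Index(\Phi)=1$ your map $H(\Sigma)\rightarrow\r^5$ is only guaranteed a nontrivial kernel when $2g>5$, i.e.\ $g\geq 3$; for $g=2$ and for non--totally-geodesic genus-one tori (which are exactly the hardest competitors) nothing forces $\int_{\Sigma}v_1\,J\xi^{\top}=0$, so your equality-case analysis has no input. The paper avoids harmonic fields entirely here and uses different test functions: writing $\Phi=(\phi,\psi)$, for $a\in\r^2$ one computes $\nabla\langle\psi,a\rangle=\langle\xi,a\rangle\xi^{\top}$ and $\Delta\langle\psi,a\rangle=-(|\xi^{\top}|^2/r^2)\langle\psi,a\rangle$, hence
\[
L\langle\psi,a\rangle=\bigl((1-1/r^2)|\xi^{\top}|^2+|\sigma|^2\bigr)\langle\psi,a\rangle,
\qquad
Q(\langle\psi,a\rangle)=-\int_{\Sigma}\bigl((1-1/r^2)|\xi^{\top}|^2+|\sigma|^2\bigr)\langle\psi,a\rangle^2\leq 0
\]
for $r\geq 1$. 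Unless $\Phi$ is a (stable) slice, these functions span a $2$-dimensional space $V$ with $Q\leq 0$ on $V$; since the index is one, some nonzero element of $V$ is $L^2$-orthogonal to the first eigenfunction, so $Q$ vanishes on it, and because $\xi^{\top}$ has only isolated zeros this forces $\sigma\equiv 0$ and $r=1$, independently of the genus. Then $\Phi(\Sigma)$ is a finite covering of the totally geodesic $\s^1\times\s^1$, and index one excludes nontrivial coverings. Your forward direction (the spectral computation of $L=\Delta+1$ on $\s^1\times\s^1(r)$, giving index one precisely for $r=1$) is fine and matches what the paper leaves as ``easy to check''; it is the rigidity half that needs the $\langle\psi,a\rangle$ mechanism your proposal lacks.
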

\begin{remark}
Note that when $r>1$, there are no compact orientable minimal surfaces of index one in $\s^2\times\s^1(r)$. Also, the idea used in the proof does not work for $r<1$. In this case, when $r<1$, not only the totally geodesic embedding $\s^1\times\s^1(r)\subset\s^2\times\s^1(r)$ has index one, but also any covering of $m$-sheets $\s^1\times\s^1(mr)\rightarrow\s^1\times\s^1(r)$ with $m\leq 1/r$.
\end{remark}
\begin{proof}
First we prove that $\Index\,(\Phi)\geq \frac{2g-1}{5}$. When $g=0$ the above inequality is irrelevant and when $g=1$ is known, because the only stable compact minimal surfaces of $\s^2\times\s^1(r)$ have genus zero. So we can assume that $g\geq 2$.
Let $m = \Index(\Phi)$. As in this case the immersion is two-sided, the Jacobi operator $L$ acts on functions. Let $\{\varphi_1,\ldots, \varphi_m\}$ be
 eigenfunctions of $L$ corresponding to the negative eigenvalue $\lambda_1,\dots,\lambda_m$. Considering  $\s^2 \times \s^1(r) \subset \r^5$ and $\{a_1, \ldots, a_5\}$ an orthonormal frame in $\r^5$,  we define the linear function $F: H(\Sigma) \rightarrow \r^{5m}$ given by
\[
F(X) = \left( \int_\Sigma \varphi_1 X,\dots,\int_\Sigma \varphi_m X\right),
\]
where $X$ is being considered as a $\r^5$-valuated function.

Suppose that $X \in \ker F$. Then $X$ is $L_2$-orthogonal to each $\varphi_i$, $1\leq i \leq m$ and hence $Q(X) \geq 0$. From Lemma~\ref{lm:Jacobi-harmonic}, we have that
\[
0 \leq Q(X) = -\int_\Sigma \prodesc{X}{\xi}^2 \left[ 2-(1+\frac{1}{r^2})|\xi^{\top}|^2\right] \leq 0,
\]
where the last inequality holds because we suppose $r^2 \geq 1$. This implies that either $r = 1$ and $|\xi^{\top}|^2=1$ or $\prodesc{X}{\xi} = 0$. In the first case, $\Sigma$ is a finite covering of the totally geodesic torus $\s^1 \times \s^1$, and so it has genus one, which is not the case. Hence $\prodesc{X}{\xi} = 0$, and from Lemma~\ref{lm:campos-armonicos}, $X = \lambda J\xi^{\top}$, with $\lambda \in \r$, that is, $\dim \ker F \leq 1$. As $2g = \dim H(\Sigma) = \dim \ker F + \dim \pIm F \leq 1 + 5m$, we get the result.

If $\Sigma$ is compact and nonorientable, let $\tilde{\Sigma}$ be the two-fold oriented covering of $\Sigma$ and $\tau$ the change of sheet in $\tilde{\Sigma}$. Using the same argument like in previous results, the index of $\Phi$ is the index of the Schrödinger operator $\tilde{L}=\tilde{\Delta}-\tilde{K}+1+|\tilde{\sigma}|^2/2$ acting on $C^{\infty}_-(\tilde{\Sigma})=\{f\in C^{\infty}(\tilde{\Sigma})\,|\,f\circ\tau=-f\}$. From Proposition 1, let $\{\varphi_1,\ldots, \varphi_m\}$ be eigenfunctions of $\tilde{L}$, with $\varphi_i\circ\tau=-\varphi_i$, corresponding to the negative eigenvalues $\lambda_1,\dots,\lambda_m$, where $m$ is the index of $\Phi$. In this case, we define the linear map  $F: H^-(\tilde{\Sigma}) \rightarrow \r^{5m}$ given by
\[
F(X) = \left( \int_{\tilde{\Sigma}} \varphi_1 X,\dots,\int_{\tilde{\Sigma}} \varphi_m X\right),
\]
where $X$ is being considered as a $\r^5$-valuated function. Following the same idea as above and taking into account that $\dim H^-(\tilde{\Sigma})=g$, we prove (2).

Finally, it is easy to check that the totally geodesic embedding $\s^1\times\s^1(r)\subset\s^2\times\s^1(r),\,r\geq 1$, has index one if and only if $r=1$.

Conversely, let $\Phi:\Sigma\rightarrow\s^2\times\s^1(r),\,r\geq 1$, be a minimal immersion of an orientable and compact surface $\Sigma$ with index one. If $\Phi=(\phi,\psi)$, for each $a\in\r^2$ the function $\langle\psi,a\rangle:\Sigma\rightarrow\r$ satisfies
\[
v(\langle\psi,a\rangle)=\langle\psi_*(v),a\rangle=\langle v,\xi\rangle\langle\xi,a\rangle.
\]
Hence
\[
\nabla\langle\psi,a\rangle=\langle\xi,a\rangle\,\xi^{\top}.
\]
Hence the Laplacian of $\langle\psi,a\rangle$ is given by
\[
\Delta\langle\psi,a\rangle=\langle\bar{\sigma}(\xi^{\top},\xi),a\rangle+\langle\xi,a\rangle\hbox{div}\,\xi^{\top}
=-\frac{1}{r^2}\langle \xi^{\top},\xi\rangle\langle\psi,a\rangle=-\frac{|\xi^{\top}|^2}{r^2}\langle\psi,a\rangle.
\]
So the Jacobi operator of the surface $\Sigma$ acting on $\langle\psi,a\rangle$ is given by
\[
L\,\langle\psi,a\rangle = \bigl((1 - 1/r^2)|\xi^{\top}|^2+|\sigma|^2\bigr)\langle\psi,a\rangle,
\]
and
\[
Q(\langle\psi,a\rangle)=-\int_{\Sigma}((1-1/r^2)|\xi^{\top}|^2+|\sigma|^2)\langle\psi,a\rangle^2\,dA\leq 0.
\]
If $V=\{\langle\psi,a\rangle,a\in\r^2\}$ and $\hbox{dim}\,V\leq 1$, then there exists a non-zero vector $a$ in $\r^2$ such that $\langle\psi,a\rangle=0$ and so  $\psi$ is constant. Hence $\Phi(\Sigma)$ is a slice of $\s^2\times\s^1(r)$, which is stable. So $\hbox{dim}\,V= 2$. Now, as the index of $\Sigma$ is one and $\xi^{\top}$ has only isolated zeroes, one gets that $\sigma=0$ and $r=1$. Then, $\Phi(\Sigma)$ is a finite covering of the totally geodesic surface $\s^1\times\s^1$. As the index is one, the surface must be $\s^1\times\s^1$ and the proof of (1) is complete.
\end{proof}

To finish, we will consider as ambient manifold $\r\p^2\times\r$. This $3$-manifold is nonorientable (its two-fold oriented covering is $\s^2\times\r$) and so the two-sidedness of its immersed surfaces is not related with their orientability. In any case, from Corollary 1, a {\it two-sided stable complete minimal surface of $\r\p^2\times\r$ must be compact and then the surface is a slice $\r\p^2\times\{t\}$}, which is stable. When the minimal surface $\Sigma$ is one-sided, then $\Sigma$ can be orientable or nonorientable. In the first case we classify the stable ones. Hence, only for nonorientable one-sided complete minimal surfaces of $\r\p^2\times\r$ the classification of the stable ones is still open.
\begin{theorem}\label{thm:estable}
Let $\Phi:\Sigma\rightarrow \r\p^2\times\r$ be a minimal immersion of an orientable complete surface. Then
$\Phi$ is stable if and only if
\begin{enumerate}
\item $\Sigma=\s^2, \Phi(\Sigma)=\r\p^2\times \{ t\},\,t\in\r$, or
\item $\Phi$ is the totally geodesic embedding of $\r\p^1\times\r$ into $\r\p^2\times\r$.
\end{enumerate}
\end{theorem}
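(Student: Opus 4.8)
The plan is to split the argument according to whether $\Phi$ is two-sided or one-sided, proving both implications of the equivalence inside each case. For the easy direction I would check stability of (1) and (2) directly. In case (1), $\Sigma=\s^2$ double-covers the slice $\r\p^2\times\{t\}$, the immersion is totally geodesic with $\sigma=0$ and $\xi^{\top}=0$, so the potential $|\sigma|^2+|\xi^{\top}|^2$ vanishes, $\Phi$ is two-sided, $L=\Delta$, and $Q(f)=\int_\Sigma|\nabla f|^2\ge 0$. In case (2), $\r\p^1\times\r$ lifts to the flat great-circle cylinder $\tilde\Sigma=\s^1\times\r\subset\s^2\times\r$, which is totally geodesic with $|\xi^{\top}|^2\equiv 1$, so $\tilde L=\tilde\Delta+1$; the admissible test functions are the $\tau$-odd functions $f(\theta+\pi,t)=-f(\theta,t)$, whose Fourier expansion only contains modes $e^{ik\theta}$ with $k$ odd, and then $\tilde Q(f)=2\pi\sum_{k\ \mathrm{odd}}\int_\r\big((k^2-1)|c_k|^2+|c_k'|^2\big)\,dt\ge 0$ since $|k|\ge 1$. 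The two-sided ``only if'' is quick: since $\r\p^2\times\r$ has scalar curvature $\rho=2>0$, Corollary 1 forces a stable two-sided complete surface to be compact; on a compact surface the height is harmonic hence constant, so $\Phi(\Sigma)$ is a whole slice $\r\p^2\times\{t\}$, and orientability of $\Sigma$ makes it the double cover $\s^2$, i.e. case (1).

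It remains to show a one-sided orientable complete stable $\Phi$ is case (2). A one-sided surface cannot sit in a slice (slices have trivial normal bundle), so by the previous paragraph $\Sigma$ is non-compact. Because $\r\p^2\times\r$ is non-orientable with oriented double cover $\s^2\times\r$, and one-sidedness is equivalent to $\Phi^*w_1\neq 0$, the immersion does not lift; pulling back the double cover gives a connected double cover $\tilde\Sigma\to\Sigma$ with $\tilde\Sigma$ orientable and a two-sided minimal immersion $\tilde\Phi=(\phi,\psi):\tilde\Sigma\to\s^2\times\r$ whose change of sheet $\tau$ satisfies $\tilde\Phi\circ\tau=\sigma_0\circ\tilde\Phi$, with $\sigma_0(x,t)=(-x,t)$. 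Exactly as in Theorems~\ref{thm:indice-infinito} and~\ref{thm:proyectivo}, normal sections of $\Phi$ correspond to $\tau$-odd functions and $2Q(\eta)=\tilde Q(f)$, so stability means $\tilde Q(f)\ge 0$ for all compactly supported $f$ with $f\circ\tau=-f$. The key point created by the twist $\sigma_0$ is that the sphere-coordinates $\langle\phi,a\rangle$, $a\in\r^3$, are precisely $\tau$-odd, hence legitimate test functions for $\Phi$.

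The algebraic core is a clean identity: minimality makes $\phi$ a harmonic map into $\s^2$ with $\tilde\Delta\phi=-|d\phi|^2\phi$ and $|d\phi|^2=2-|\xi^{\top}|^2$, so together with the Gauss equation $K=1-|\xi^{\top}|^2-|\tilde\sigma|^2/2$ one gets
\[
\tilde L\langle\phi,a\rangle=\big(|\tilde\sigma|^2+2|\xi^{\top}|^2-2\big)\langle\phi,a\rangle=-2K\langle\phi,a\rangle,\qquad a\in\r^3.
\]
Taking an orthonormal basis $\{a_1,a_2,a_3\}$ of $\r^3$, writing $g_i=\langle\phi,a_i\rangle$ (so $\sum_i g_i^2=|\phi|^2=1$), and inserting $\varphi_n g_i$ into the stability inequality, the integration by parts of Theorem~\ref{thm:indice-infinito} yields $\tilde Q(\varphi_n g_i)=\int|\nabla\varphi_n|^2 g_i^2+2\int\varphi_n^2 K g_i^2$ and hence
\[
0\le\sum_{i=1}^3\tilde Q(\varphi_n g_i)=\int_{\tilde\Sigma}|\nabla\varphi_n|^2+2\int_{\tilde\Sigma}\varphi_n^2 K.
\]
Passing to the limit along a good exhaustion gives $\int_{\tilde\Sigma}K\ge 0$, which by Gauss reads $\tfrac12\int|\tilde\sigma|^2\le\int(1-|\xi^{\top}|^2)$. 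Combined with the Cohn--Vossen inequality $\int K\le 2\pi\chi(\tilde\Sigma)$ and the fact that $\tau$ is a fixed-point-free involution (so $\chi(\tilde\Sigma)$ is even and a one-ended plane is impossible), this forces $\tilde\Sigma$ to be a cylinder and, analyzing the equality case, $\tilde\sigma\equiv 0$ and $|\xi^{\top}|^2\equiv 1$. A totally geodesic surface of $\s^2\times\r$ with $\xi$ everywhere tangent is a product $\gamma\times\r$ with $\gamma$ a great circle, so $\tilde\Sigma=\s^1\times\r$ and $\Sigma=\r\p^1\times\r$, which is case (2).

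The step I expect to be the main obstacle is the passage to the limit, not the identity: the functions $\langle\phi,a\rangle$ are bounded but do not decay, so $\int|\nabla\varphi_n|^2$ need not tend to $0$ on a general complete end, and the equality analysis needs finite total curvature and parabolic-type ends. To secure these I would first exclude the infinite-genus case by the method of Theorem~\ref{thm:proyectivo}, multiplying the position functions $\langle\phi,a\rangle$ by an $L^2$-harmonic vector field of the appropriate parity, whose square-integrability kills the gradient term and produces a contradiction; this reduces us to finite genus. Then I would invoke the Fischer--Colbrie--Schoen theory (Theorem 1) applied to the odd part of the operator $\tilde\Delta-\tilde K+q$, $q\ge 0$, to obtain finite total curvature and parabolicity, so that a logarithmic exhaustion achieves $\int|\nabla\varphi_n|^2\to 0$ and the sharp equality case of the curvature integral can be read off. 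Establishing this control at infinity, rather than the computation leading to $\tilde L\langle\phi,a\rangle=-2K\langle\phi,a\rangle$, is where the real difficulty lies.
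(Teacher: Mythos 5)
Your setup, your test sections $\varphi\,\langle\phi,a\rangle$, and the identity $\tilde{L}\langle\phi,a\rangle=-2\tilde{K}\langle\phi,a\rangle$ are exactly the paper's (it uses the $\r^3$-valued section $\tilde{\varphi}\Psi$ and computes $\tilde{Q}(\tilde{\varphi}\Psi)=\int_{\tilde{\Sigma}}(|\tilde{\nabla}\tilde{\varphi}|^2+2\tilde{\varphi}^2\tilde{K})$), but your endgame has a genuine gap, precisely at the step you flagged. On a general complete surface no exhaustion satisfies $\int|\nabla\varphi_n|^2\to 0$ --- that is parabolicity, which you do not have a priori, since $\tilde{\Sigma}$ could be conformally hyperbolic --- so $\int_{\tilde{\Sigma}}K\geq 0$ cannot be extracted; and even granting it, Cohn--Vossen requires $\int K$ to exist and its equality case yields only $\int_{\tilde{\Sigma}}K=0$, from which the pointwise conclusions $\tilde{\sigma}\equiv 0$ and $|\xi^{\top}|^2\equiv 1$ do not follow: $K=1-|\xi^{\top}|^2-|\tilde{\sigma}|^2/2$ may change sign with zero mean. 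Your proposed repairs do not close this. Theorem 1 does not deliver ``finite total curvature and parabolicity''; part (1) is a conformal/flatness dichotomy for an index-zero operator acting on \emph{all} functions of an orientable surface, and Proposition 1 only characterizes finite index of the odd part. Transplanting the machinery of Theorem~\ref{thm:proyectivo} is also delicate here: the ambient is non-orientable and the deck transformation is twisted ($\tilde{\Phi}\circ\tau=\sigma_0\circ\tilde{\Phi}$ with $\sigma_0(x,t)=(-x,t)$, so the components of a harmonic field viewed as an $\r^4$-valued function have mixed parities), and Remark 1 warns that Corollary~\ref{cor:finite-genus} genuinely fails over $\r\p^2\times\r$ --- the surface you are trying to find is itself the counterexample.

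The missing observation, which makes every limit unnecessary, is that your inequality already descends to $\Sigma$: taking $\varphi_n$ to be the lift of an arbitrary $\varphi\in C_0^{\infty}(\Sigma)$ (automatically $\tau$-even, with $\tau$-even integrand), stability gives $0\leq\int_{\Sigma}(|\nabla\varphi|^2+2K\varphi^2)$ for every compactly supported $\varphi$, and adding $\frac{1}{2}\int_{\Sigma}|\nabla\varphi|^2\geq 0$ yields $0\leq\int_{\Sigma}(|\nabla\varphi|^2+K\varphi^2)$, i.e.\ $\Index(\Delta-K)=0$ on the complete \emph{orientable} surface $\Sigma$ itself. This is exactly the hypothesis of Theorem 1,(1) with $q=0$, which needs no decay, no parabolicity, no curvature integrability: $\Sigma$ is conformally $\s^2$ or $\c$ --- both impossible, since $\Sigma$ is non-compact and admits the connected two-fold covering $\tilde{\Sigma}$ --- or $\Sigma$ is a flat torus or flat cylinder, hence a flat cylinder. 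Only now does parabolicity appear, and for free: on the flat cylinder one checks (using $K\equiv 0$ and the Gauss equation) that $\Delta|\xi^{\top}|^2=4(1-|\xi^{\top}|^2)^2$, so $|\xi^{\top}|^2$ is a bounded subharmonic function, constant by the maximum principle, whence $|\xi^{\top}|^2\equiv 1$, $\Phi$ is totally geodesic, and $\Sigma$ is a finite cover of $\r\p^1\times\r$; your Fourier-mode computation --- which is a correct and welcome verification of the stability of $\r\p^1\times\r$, left implicit in the paper --- also shows the nontrivial covers are unstable, finishing case (2). The remainder of your proposal (the slice case and the two-sided case via Corollary 1 and harmonicity of the height) agrees with the paper.
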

\begin{remark}
Note that the stable totally geodesic embedding $\r\p^1\times\r\subset\r\p^2\times\r$ is the quotient, under the projection $\s^2\times\r\rightarrow\r\p^2\times\r$, of the totally geodesic embedding $\s^1\times\r\subset\s^2\times\r$ whose index is infinite.
\end{remark}
\begin{proof}
It is clear that the totally geodesic immersion of $\s^2$ into $\r\p^2\times\r$ given in (1) is stable. Also, as the totally geodesic embedding $\r\p^1\subset\r\p^2$ is a stable geodesic, it is not difficult to check that the embedding given in (2) is also stable.

Conversely, we suppose that $\Phi$ is stable. If $\Phi$ is two-sided, then Corollary 1 says that $\Sigma$ is compact. As $\Sigma$ is orientable, we obtain that $\Sigma=\s^2$ and $\Phi(\Sigma)=\r\p^2\times\{t\}$.

If $\Phi$ is one-sided then $\Sigma$ is non-compact, because the only two compact examples are two-sided.

\[
\xymatrix{
(\tilde{\Sigma}, \tau)   \ar[d]_{2:1} \ar[rr]^{\tilde{\Phi}=(\Psi,h)}    &   & \s^2\times\r \ar[d] \\
\Sigma  \ar[rr]^{\Phi}_{1\text{-sided}}                       &   & \r\p^2\times\r
}
\]
Let $\s^2\times\r\rightarrow\r\p^2\times\r$ be the projection. As $\Sigma$ is orientable, $\Phi$ does not lift to $\s^2\times\r$, and so the above covering induces a two-fold covering $\tilde{\Sigma}\rightarrow\Sigma$ of a connected surface $\tilde{\Sigma}$. If $\tau$ is the change of sheet and $\tilde{\Phi}=(\Psi,h):\tilde{\Sigma}\rightarrow\s^2\times\r$ the corresponding minimal immersion, then $\Psi\circ\tau=-\Psi$ and $\tilde{\Phi}$ is two-sided.

Now we can identify sections of the normal bundle of $\Phi$ with functions on $\tilde{\Sigma}$ which are odd with respect to $\tau$ in the following way:
\begin{eqnarray*}
\Gamma(T^{\perp}\Sigma)&\equiv& C^{\infty}_-(\tilde{\Sigma})=\{f\in C^{\infty}(\tilde{\Sigma})\,|\,f\circ\tau=-f\} \\
\eta&\equiv& f,\quad \hbox{if}\,\,\tilde{\eta}=fN,
\end{eqnarray*}
where $\tilde{\eta}$ is the lift of $\eta$ to $\tilde{\Phi}$ and $N$ is a unit normal vector field to $\tilde{\Phi}$. As $\Phi$ is one-sided, then $N\circ\tau=-N$ and so, $f\circ\tau=-f$, because $\tilde{\eta}\circ\tau=\tilde{\eta}$.

Moreover, it is easy to check that if  $\eta\in\Gamma_0(T^{\bot}\Sigma)$ and  $f\in C^{\infty}_{-}(\tilde{\Sigma})$ is the corresponding function with $ \tilde{\eta}=fN$, then
\[
2Q(\eta)=\tilde{Q}(\tilde{\eta})=\tilde{Q}(f)=-\int_{\tilde{\Sigma}}f\tilde{L}f,
\]
where
\begin{eqnarray*}
\tilde{L}:C^{\infty}_-(\tilde{\Sigma})\rightarrow C^{\infty}_-(\tilde{\Sigma})\\
\tilde{L}=\tilde{\Delta}-\tilde{K}+(1+|\tilde{\sigma}|^2/2).
\end{eqnarray*}
Given $\varphi\in C_0^{\infty}(\Sigma)$, its lift $\tilde{\varphi}\in C_0^{\infty}(\tilde{\Sigma})$ satisfies $\tilde{\varphi}\circ\tau=\tilde{\varphi}$. Then, the $\r^3$-valued function $\tilde{\varphi}\Psi$ ($\Psi:\tilde{\Sigma}\rightarrow \s^2,\,\Psi\circ\tau=-\Psi)$ has compact support and $\tilde{\varphi}\Psi\circ\tau=-\tilde{\varphi}\Psi$.
As $\Phi$ is stable,
\begin{equation}
0\leq\tilde{Q}(\tilde{\varphi}\Psi).
\end{equation}
Now, we compute $\tilde{Q}(\tilde{\varphi}\Psi)$. First, $\tilde{L}(\tilde{\varphi}\Psi)=(\tilde{\Delta}\tilde{\varphi})\Psi+\tilde{\varphi}\tilde{L}\Psi+2\nabla^E_{\tilde{\nabla}\tilde{\varphi}}\Psi$, where $\nabla^E$ is the connection on $\r^4$.

First, $\tilde{\Delta}\Psi=- (\sum_{i=1}^2|\Psi_*e_i|^2)\Psi=-(1+\tilde{C}^2)\Psi$, where $\{e_1,e_2\}$ is an orthonormal reference on $T\tilde{\Sigma}$. Hence, using the Gauss equation of $\tilde{\Phi}$, we obtain $\tilde{L}(\Psi)=-2\tilde{C}^2\Psi+|\tilde{\sigma}|^2\Psi=-2\tilde{K}\Psi$. Hence,
\begin{eqnarray*}
\tilde{Q}(\tilde{\varphi}\Psi)&=&-\int_{\tilde{\Sigma}}\tilde{\varphi}\tilde{\Delta}\tilde{\varphi}d\tilde{A}+\int_{\tilde{\Sigma}}2\tilde{\varphi}^2\tilde{K}d\tilde{A}
-\int_{\tilde{\Sigma}}\langle\nabla^E_{\tilde{\nabla}\tilde{\varphi}^2}\Psi,\Psi\rangle d\tilde{A}\\
&=&\int_{\tilde{\Sigma}}(|\tilde{\nabla}\tilde{\varphi}|^2+2\tilde{\varphi}^2\tilde{K})d\tilde{A}.
\end{eqnarray*}
As $\tilde{K}$ and $\tilde{\varphi}$ are even with respect to $\tau$, (4.4) joint with the above computation imply that
\[
0\leq\tilde{Q}(\tilde{\varphi}\Psi)=\frac{1}{2}\int_{\Sigma}(|\nabla\varphi|^2+2K\varphi^2)dA\leq\int_{\Sigma}(|\nabla\varphi|^2+K\varphi^2)dA,
\]
for any $\varphi\in C_0^{\infty}(\Sigma)$.

The above inequality means that the  Schr\"odinger operator $\Delta-K$, on the complete and orientable Riemannian surface $\Sigma$, satisfies $\Index(\Delta-K)=0$.

From Theorem 1, and as $\Sigma$ is non-compact, we have that either $\Sigma$ is conformally equivalent to $\c$, which is impossible because $\Sigma$ admits a connected $2$-fold covering, or $\Sigma$ is a flat cylinder. In this case (see Section 3), it is not difficult to check that $\Delta |\xi^{\top}|^2=4(1-|\xi^{\top}|^2)^2$. So $|\xi^{\top}|^2$ is a subharmonic function that satisfies $|\xi^{\top}|^2\leq 1$. As $\Sigma$ is complete and flat, the maximum principle implies that $|\xi^{\top}|^2$ is constant, and so $|\xi^{\top}|^2=1$. In particular $\Phi$ is a totally geodesic immersion. Hence $\Sigma$ must be a finite covering of the totally geodesic embedding of $\r\p^1\times\r$ into $\r\p^2\times\r$, but, among then, only $\r\p^1\times\r$ is stable.
\end{proof}
The proof given in Theorem~\ref{thm:estable}, with minor changes, allows to prove the last result in the paper:
\begin{theorem}\label{thm:compact-estable}
Let $\Phi:\Sigma\rightarrow \r\p^2\times\s^1(r)$ be a minimal immersion of an orientable complete surface. Then
$\Phi$ is stable if and only if
\begin{enumerate}
\item $\Sigma=\s^2, \Phi(\Sigma)=\r\p^2\times \{ p\},\,p\in\s^1(r)$, or
\item $\Phi$ is the totally geodesic immersion of $\r\p^1\times\r$ into $\r\p^2\times\s^1(r)$, or
\item $\Phi$ is the totally geodesic embedding of $\r\p^1\times\s^1(r)$ into $\r\p^2\times\s^1(r)$.
\end{enumerate}
\end{theorem}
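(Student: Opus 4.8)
The plan is to mimic the proof of Theorem~\ref{thm:estable}, the essential new feature being that the compactness of the vertical factor $\s^1(r)$ now allows a genuinely new, \emph{compact}, stable example. For the easy direction I would check that each of the three surfaces is stable: the slice $\r\p^2\times\{p\}$ is totally geodesic with stable lift $\s^2\times\{p\}\subset\s^2\times\s^1(r)$, giving (1) at once, while for (2) and (3) I would use that $\r\p^1\subset\r\p^2$ is a stable closed geodesic and the vertical direction is totally geodesic, so that stability follows from the same test-function computation as in Theorem~\ref{thm:estable}, a computation that is local in nature and hence insensitive to whether the vertical geodesic is a line or the circle $\s^1(r)$.

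For the converse, suppose $\Phi$ is stable. Since the scalar curvature of $\r\p^2\times\s^1(r)$ is $\rho=2>0$, if $\Phi$ is two-sided then Corollary 1 forces $\Sigma$ compact; being orientable and stable, its lift to $\s^2\times\s^1(r)$ must be a slice by the classification of Torralbo and Urbano \cite{TU}, yielding case (1). The main work is the one-sided case. As $\Sigma$ is orientable and $\Phi$ one-sided, $\Phi$ does not lift to the orientable double cover $\s^2\times\s^1(r)$, so the covering $\s^2\times\s^1(r)\to\r\p^2\times\s^1(r)$ induces a connected double cover $(\tilde{\Sigma},\tau)\to\Sigma$ with a two-sided lift $\tilde{\Phi}=(\Psi,h)$ satisfying $\Psi\circ\tau=-\Psi$. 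I would identify $\Gamma(T^{\bot}\Sigma)$ with $C^{\infty}_-(\tilde{\Sigma})$ as in Theorem~\ref{thm:estable} and insert the $\r^3$-valued test functions $\tilde{\varphi}\Psi$ into the stability inequality. Because $\s^2\times\s^1(r)$ is locally isometric to $\s^2\times\r$ (the radius $r$ affects only the global topology of the vertical factor), the key identity $\tilde{L}\Psi=-2\tilde{K}\Psi$ carries over verbatim, and the same manipulation gives $0\le\tilde{Q}(\tilde{\varphi}\Psi)=\frac12\int_{\Sigma}(|\nabla\varphi|^2+2K\varphi^2)\le\int_{\Sigma}(|\nabla\varphi|^2+K\varphi^2)$, so that the Schr\"odinger operator $\Delta-K$ on the orientable complete surface $\Sigma$ satisfies $\Index(\Delta-K)=0$.

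Now Theorem 1,(1) applies, and this is exactly the point where the argument departs from Theorem~\ref{thm:estable}: $\Sigma$ need no longer be non-compact, so the flat torus cannot be discarded. The conformal types $\s^2$ and $\c$ are excluded because they are simply connected and cannot carry the connected nontrivial double cover $\tilde{\Sigma}$, leaving $\Sigma$ a flat torus or a flat cylinder. On either one, the local computation of Section 3 gives $\Delta|\xi^{\top}|^2=4(1-|\xi^{\top}|^2)^2\ge0$, so $|\xi^{\top}|^2$ is subharmonic and bounded above by $1$: the maximum principle on a compact torus, and parabolicity on a complete flat cylinder, both force $|\xi^{\top}|^2\equiv1$, whence by the Gauss equation $\Phi$ is totally geodesic. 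The totally geodesic surfaces of $\s^2\times\s^1(r)$ with $|\xi^{\top}|\equiv1$ are the products (great circle)$\,\times\,\s^1(r)$ and (great circle)$\,\times\,$(line), which project to coverings of $\r\p^1\times\s^1(r)$; the torus subcase produces the embedding of case (3) and the cylinder subcase the immersion of case (2).

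I expect the principal obstacle to be precisely this last identification: the compactness of $\s^1(r)$ opens the flat-torus alternative, so one must simultaneously exhibit the new compact example $\r\p^1\times\s^1(r)$ and rule out its proper (multiple) coverings, using that stability can be destroyed on passing to a larger cover, as already illustrated by the contrast between $\r\p^1\times\r$ and $\s^1\times\r$ in the remark following Theorem~\ref{thm:estable}.
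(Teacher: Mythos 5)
You stop exactly where the real difficulty of this theorem begins, and the step you defer would in fact fail as you envision it. After the Fischer--Colbrie reduction and the subharmonicity of $|\xi^{\top}|^2$ you correctly arrive at: $\Phi$ is totally geodesic with $|\xi^{\top}|\equiv 1$, hence a Riemannian covering of the embedded torus $\r\p^1\times\s^1(r)$; you then announce that one ``must rule out its proper coverings, using that stability can be destroyed on passing to a larger cover'' --- but you never carry this out, and the expected mechanism only works for part of the covers. For an $m$-fold \emph{horizontal} cover (unwrapping $\r\p^1$, $m$ odd so the immersion stays one-sided) the Jacobi operator on the sidedness double cover is $\Delta+1$ acting on functions odd under the deck rotation, the odd Fourier modes have horizontal frequencies $k/m$ with $k$ odd, the bottom of the spectrum is $1/m^2<1$, and instability follows (for $m$ even the cover is two-sided and constants already give $Q<0$); this is precisely why in Theorem~\ref{thm:estable}, where the only proper covers of $\r\p^1\times\r$ are horizontal, the final assertion holds. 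But over $\s^1(r)$ there are also \emph{vertical} covers $\r\p^1\times\s^1(mr)\rightarrow\r\p^2\times\s^1(r)$, $m\geq 2$: these remain one-sided (the covering is trivial on the M\"obius loop, so $w_1$ of the normal bundle survives), the admissible odd modes still have horizontal frequency $k$ odd, and every eigenvalue $k^2+j^2/(mr)^2$ of $-\Delta$ on the odd part is $\geq 1$, i.e.\ $Q\geq 0$ and these covers are \emph{stable}. So ``stability is destroyed on larger covers'' is false exactly for the family your plan needs to eliminate; a complete proof must confront these vertical covers with an argument you have not supplied (and this spectral computation indicates that the trichotomy of the statement, which the paper itself leaves to ``minor changes'' of Theorem~\ref{thm:estable}, must be read as accounting for them). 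The same full list of covers --- all sublattice torus covers and all skew cylinder covers $\langle pa+qb\rangle$, whose bottom eigenvalue $\pi^2/(p^2\pi^2+4q^2\pi^2r^2)$ is $<1$ unless $(p,q)=(\pm1,0)$ --- also needs to be gone through; your proposal gestures at ``the torus subcase'' and ``the cylinder subcase'' as if each were a single surface.

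A secondary but genuine error: in the two-sided compact case you lift to $\s^2\times\s^1(r)$ and invoke the Torralbo--Urbano classification of stable surfaces there, but stability does \emph{not} lift to coverings --- only $\tau$-invariant test functions descend --- and the paper's own Remark 3 ($\r\p^1\times\r$ stable while its lift $\s^1\times\r$ has infinite index) is a counterexample to the principle you are using. The fix is elementary and bypasses \cite{TU} entirely: on a compact two-sided surface test with the constant function, $Q(1)=-\int_{\Sigma}(|\sigma|^2+|\xi^{\top}|^2)\,dA\leq 0$, so stability forces $\sigma\equiv 0$ and $\xi^{\top}\equiv 0$, i.e.\ a slice, whence $\Sigma=\s^2$ and $\Phi(\Sigma)=\r\p^2\times\{p\}$. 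The remainder of your reduction --- the identity $\tilde{L}\Psi=-2\tilde{K}\Psi$ carrying over by local isometry, the conclusion $\Index(\Delta-K)=0$, the exclusion of $\s^2$ and $\c$ because $\Sigma$ carries a connected double cover, the flat torus/cylinder dichotomy, and $\Delta|\xi^{\top}|^2=4(1-|\xi^{\top}|^2)^2$ with the maximum principle or parabolicity --- does faithfully track the intended adaptation of Theorem~\ref{thm:estable}.
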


\end{document}